\newtheorem{coro}{Corollary}[section]
\newtheorem{lem}{Lemma}[section]
\newtheorem{thm}{Theorem}[section]
\newcommand{\ddiv}{\operatorname{div}}
\newcommand{\verti}[1]{\left|#1\right|}
\newcommand{\vertiii}[1]{
  \left\vert\kern-0.25ex
  \left\vert\kern-0.25ex
  \left\vert #1
  \right\vert\kern-0.25ex
  \right\vert\kern-0.25ex
  \right\vert
}
\newcommand{\norm}[1]{\left\lVert#1\right\rVert}
\newcommand{\bndint}[1]{\sum_{T\in\mathcal T_h}\langle #1 \rangle_{\partial T}}
\newcommand{\bint}[2]{\left\langle #1 \right\rangle_{#2}}
\newcommand{\dual}[1]{\left\langle #1 \right\rangle}
\numberwithin{equation}{section}
\begin{document}
\title
{
  \Large\bf Analysis of a family of HDG methods for second order elliptic problems
  \thanks{
    This work was supported in part by National Natural Science Foundation of
    China (11171239) and Major Research Plan of National Natural Science
    Foundation of China (91430105).
  }
}

\author
{
  Binjie Li\thanks{Email: libinjiefem@yahoo.com}, \quad
  Xiaoping Xie \thanks{Corresponding author. Email: xpxie@scu.edu.cn} \\
  {School of Mathematics, Sichuan University, Chengdu 610064, China}
}

\date{}
\maketitle
\begin{abstract}
  In this paper, we analyze a family of hybridizable discontinuous Galerkin (HDG) methods
  for second order elliptic problems in two and three dimensions. The methods use piecewise
  polynomials of degree $k\geqslant 0$ for both the flux and numerical trace, and
  piecewise polynomials of degree $k+1 $ for the potential. We establish error estimates for the numerical flux and potential under the minimal
  regularity condition.
  Moreover, we construct a local postprocessing for the flux, which produces a
  numerical flux with better conservation. Numerical experiments in two-space dimensions
  confirm our theoretical results.
  \vskip 0.4cm {\bf Keywords.} HDG,\ convergence, minimal regularity,\ postprocessing
\end{abstract}

\section{Introduction}
The pioneering works on hybrid (also called mixed-hybrid) finite element methods
are due to Pian \cite{Pian1964} and Fraejis de Veubeke \cite{Veubeke1965} for the
numerical solution of linear elasticity problems. Here the term ``hybrid", as stated
in \cite{Atluri-Murakawa1977}, means ``the constraints on displacement
continuity and/or traction reciprocity at the inter-element boundaries are relaxed
a priori" in the hybrid finite element model. One may refer to
\cite{Pian-Tong1969, Pian1972,pian1984rational,Punch-Atluri1984,Pian-Tong1986,Pian-Wu,
Pian1995, Sze1992, Sze1993, xie2004optimization,Zhang-Xie2010} 
and to \cite{Simo-Rifai1990,Reddy-Simo1995,Kasper-Taylor2000}  respectively for
some developments of hybrid stress (also called assumed stress) methods and hybrid
strain (also called enhanced assumed strain)    methods based on generalized
variational principles, such as Hellinger-Reissner principle and Hu-Washizu
principle. In \cite{Braess1998,Zhou-Xie2002, Braess-C-Reddy2004,yu2011uniform},
stability and convergence were analyzed for several 4-node hybrid stress/strain
quadrilateral/rectangular elements. We refer to \cite{Brezzi1974,Brezzi-Marini1975,
Brezzi-Fortin1991} for the analysis of hybrid methods for 4th order elliptic
problems, and to \cite{Babuska-Oden-Lee1977, Oden-Lee1977,Raviart-Thomas1977,Raviart-Thomas1979}
for the analysis for second-order elliptic boundary-value problems. One may see
\cite{Ciarlet1978, Brezzi-Fortin1991, Roberts-Thomas1991, Pian1995} for more
references therein on the hybrid methods.

Due to the relaxation of function continuity at the inter-element boundaries,
the hybrid finite element model allows for piecewise-independent approximation
to the displacement/potential or stress/flux solution, thus leading to a sparse,
symmetric and positive definite discrete system through local elimination of
unknowns defined in the interior of the elements. This is one main advantage
of the hybrid methods. The process of local elimination is also called ``static
condensation" in engineering literature. In the discrete system, the unknowns
are only the globally coupled degrees of freedom of the approximation trace of
the `` displacement" or ``traction"  defined only on the boundaries of the elements.

In \cite{unified_hyd} Cockburn et.\ al.\ introduced a unifying framework for hybridization
of finite element methods for the second order elliptic problem: find the potential $u$
and the flux $ \bm\sigma$ such that
\begin{equation}\label{eq:model}
  \left\{
  \begin{array}{rl}
    \bm c\bm\sigma - \bm\nabla u = 0 & \text{in $ \Omega$},\\
    -\ddiv\bm\sigma = f & \text{in $ \Omega$},\\
    u = g & \text{on $ \partial\Omega$},
  \end{array}
  \right.
\end{equation}
where $ \Omega\subset\mathbb R^d$ is a polyhedral domain,
$ \bm c(x)\in [L^\infty(\Omega)]^{d\times d}$ is a
matrix valued function that is symmetric and uniformly positive definite
on $ \Omega$, $f\in L^2(\Omega) $ and $g\in H^{\frac{1}{2}}(\partial\Omega) $.
Here hybridization denotes the process to rewrite a finite element method
in a hybrid version. The unifying framework includes as particular cases
hybridized versions of mixed methods \cite{ArnoldBrezzi1985, BDM;1985,
COCKBURN-GOPALAKRISHNAN2004}, the continuous Galerkin (CG) method
\cite{Cockburn-Gopalakrishnan-Wang2007}, and a wide class of hybridizable discontinuous
Galerkin (HDG) methods. In \cite{unified_hyd}  three new kinds of HDG methods,
or more precisely LDG-H (Local DG-hybridizable) methods, were presented, where
the unknowns are the approximations of the potential $u$ and flux $ \bm\sigma$,
defined in the interior of elements, and the numerical trace of $u$, defined
on the interface of the elements. In \cite{projection_based_hdg} an error
analysis was carried out for one of the three HDG methods by \cite{unified_hyd},
based on the use of a projection operator inspired by the form of the numerical
traces of the methods. Following the same idea as in \cite{projection_based_hdg},
a unifying framework was proposed in \cite{cond_super_con_hdg} to analyze a large
class of methods including the hybridized versions of some mixed methods as well
as several HDG methods. We note that in \cite{Lehrenfeld2010}, a reduced HDG scheme
was proposed, which only includes the potential approximation and numerical trace
as unknowns. Recently this HDG method was analyzed in \cite{Oikawa2014} for the
Poisson problem.

In this paper, we analyze a family of HDG methods for problem \eqref{eq:model}.
We use piecewise polynomials of degree $k$ for both the numerical flux
$ \bm\sigma_h $, and the numerical trace $ \lambda_h $ of $u$, and use piecewise
polynomials of degree $k+1 $ for the numerical potential $u_h $. It should be
mentioned that, in \cite{Qiu, Qiu;2}, the same methods have been analyzed for
convection diffusion equations with constant   diffusion coefficient and linear
elasticity problems, respectively. We note that   in our analysis the diffusion
coefficient $ \bm c(x)\in [L^\infty(\Omega)]^{d\times d}$ is a matrix valued
function. By following a similar idea of \cite{Gudi;2010}, we
establish   error estimates for the numerical flux and potential under the minimal
regularity condition, i.e.
\[
  u\in H^1(\Omega)\text{ and }\bm\sigma\in H(\ddiv;\Omega).
\]
This is significant since the regularity  $u\in H^{1+\alpha}(\Omega) $ may not
hold for $ \alpha>0.5$ for practical problems. To our best knowledge, such a
kind of error estimation has not been established for HDG methods, yet.  We note
that in \cite{Cockburn;2014} an error estimate was established under the
condition that $u\in H^{1+\alpha}(\Omega) $($ \alpha>0.5$),  while the estimate
with $ \alpha<0.5$ is fundamental to the multi-grid method developed there.  In
our contribution, we also construct a local postprocessing for the flux, which produces
a numerical flux $ \bm\sigma_h^*$ with better conservation.

The rest of this paper is organized as follows. In Section \ref{HDG-method} we
follow the general framework in \cite{unified_hyd} to describe the corresponding
HDG methods. Section \ref{sec_conv} is devoted to the error estimation for the
numerical flux and potential under the minimal regularity condition.
%
Section \ref{sec_post} presents the postprocessing for the flux. Finally Section
\ref{sec_num} provides numerical results.

\section{HDG method}\label{HDG-method}
Let us start by introducing some geometric notations. Let $ \mathcal T_h $ be a
conventional conforming and shape-regular triangulation of $ \Omega$, and let
$ \mathcal F_h $ be the set of all faces of $ \mathcal T_h $. For any $T\in\mathcal
T_h $, we denote by $h_T$ the diameter of $T$ and set $h := \max_{T\in\mathcal
T_h}h_T$. For any $T\in\mathcal T_h $ and $F\in \mathcal F_h $, let $V(T) $, $M(F) $
and $ \bm W(T) $ be local spaces of finite dimensions. Then we define
\begin{align}
  V_h &:= \left\{
    v_h\in L^2(\Omega): v_h|_T \in V(T) \quad
    \text{for all $T\in\mathcal T_h $}
  \right\}, \\
  M_h &:= \left\{
    \mu_h\in L^2( \mathcal F_h ): \mu_h|_{F} \in M(F) \quad
    \text{for all $F\in\mathcal F_h $}
  \right\}, \\
  \bm W_h &:= \left\{
    \bm\tau_h\in [L^2(\Omega)]^d: \bm\tau_h|_T \in \bm W(T) \quad
    \text{for all $T\in\mathcal T_h $}
  \right\}.
\end{align}
For any $g\in L^2(\partial\Omega) $, set
\[
  M_h(g): = \{\mu_h\in M_h: \bint{\mu_h,\eta_h}{\partial\Omega} = \bint{g, \eta_h}{\partial\Omega}
  \text{ for all $ \eta_h\in M_h $}\},
\]
and define $M_h^0:=M_h(0) $. In addition, for any $T\in\mathcal T_h $, define
\begin{equation}
  M(\partial T):=\left\{\mu \in L^2(\partial T):
  \mu |_F \in M(F)\text{ for all face $F$ of $T$}\right\},
\end{equation}
and then define $ \mathcal P_T^\partial : H^1(T) \to M(\partial T) $ by
\begin{equation}\label{projection1}
  \dual{\mathcal P_T^\partial v, \mu}_{\partial T} = \dual{v, \mu}_{\partial T}
  \quad\text{for all $v\in H^1(T) $ and $ \mu \in M(\partial T) $.}
\end{equation}
Above and in what follows, for any polyhedral domain $D\subset\mathbb R^d$, we
use $(\cdot,\cdot)_D$ and $ \bint{\cdot,\cdot}{\partial D}$ to denote the
$L^2$-inner products in $L^2(D) $ and $L^2(\partial D) $ respectively, and for
convenience, we shall use $(\cdot,\cdot) $ to abbreviate $(\cdot,\cdot)_\Omega$.

Following \cite{unified_hyd}, the general framework of HDG methods is as
follows: seek $(u_h,\lambda_h,\bm\sigma_h) \in V_h \times M_h(g) \times \bm
W_h $, such that
\begin{subequations}\label{discretization}
  \begin{align}
    (\bm c\bm\sigma_h,\bm\tau_h) + (u_h,\ddiv_h\bm\tau_h) -
    \sum_{T \in \mathcal T_h} \dual{
      \lambda_h,\bm\tau_h\cdot\bm n
    }_{\partial T} &= 0,
    \label{eq:hdg-1} \\
    -(v_h,\ddiv_h\bm\sigma_h) +
    \sum_{T \in \mathcal T_h} \dual{
      \alpha_T(\mathcal P_T^\partial u_h - \lambda_h), v_h
    }_{\partial T} &= (f, v_h),
    \label{eq:hdg-2} \\
    \sum_{T \in \mathcal T_h} \dual{
      \bm\sigma_h\cdot\bm n - \alpha_T(\mathcal P_T^\partial u_h-\lambda_h),
      \mu_h
    }_{\partial T} &= 0
    \label{eq:hdg-3}
  \end{align}
\end{subequations}
hold for all $(v_h,\mu_h,\bm\tau_h)\in V_h\times M_h^0 \times \bm W_h $, where
the broken divergence operator, $ \ddiv_h $, is given by
\[
  (\ddiv_h\bm\tau_h)|_T: = \ddiv(\bm\tau_h|_T)
  \quad\text{for all }T\in\mathcal T_h,\bm\tau_h \in
\bm W_h,
\]
and $ \alpha_T$ denotes a nonnegative penalty function defined on $ \partial T$.

In this paper we choose the local spaces $V(T),\ M(F), \bm W(T) $ and the penalty parameter $ \alpha_T$
as follows:
\begin{equation}\label{ours1}
  V(T) = P_{k+1}(T),\quad
  M(F)  = P_k(F),\quad
  \bm W(T) = [P_k(T)]^d,
\end{equation}
\begin{equation}\label{ours2}
  \alpha_T  = h_T^{-1},
\end{equation}
where, for any nonnegative integer $j$, $P_j(T) $ and $P_j(F) $ denote the sets of
polynomials with degree $ \leqslant j$ on $T$ and $F$ respectively. It is easy to
obtain the following existence and uniqueness results.
\begin{lem} The HDG scheme \eqref{discretization} with the choices \eqref{ours1}-\eqref{ours2} admits
  a unique solution $(u_h,\lambda_h,\bm\sigma_h)\in V_h
  \times M_h(g) \times \bm W_h $.
\end{lem}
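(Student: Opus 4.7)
The plan is to use the fact that the scheme is a square linear system on a finite-dimensional space, so existence is equivalent to uniqueness. I therefore only need to show that the homogeneous problem (with $f=0$ and $g=0$) admits only the trivial solution $(u_h,\lambda_h,\bm\sigma_h)=(0,0,\bm 0)$, where now $\lambda_h \in M_h^0$.

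First, I would test \eqref{eq:hdg-1}, \eqref{eq:hdg-2}, \eqref{eq:hdg-3} against $\bm\tau_h = \bm\sigma_h$, $v_h = u_h$, and $\mu_h = \lambda_h$ respectively, and add them up. The $(u_h,\ddiv_h\bm\sigma_h)$ terms cancel, and the boundary terms combine (using that $\lambda_h$ is single-valued and that $\alpha_T$ is constant on each face so $\mathcal P_T^\partial$ commutes with multiplication by $\alpha_T$) to yield the energy identity
\[
  (\bm c\bm\sigma_h,\bm\sigma_h) + \sum_{T\in\mathcal T_h}\alpha_T\,\bigl\|\mathcal P_T^\partial u_h - \lambda_h\bigr\|_{L^2(\partial T)}^2 = 0.
\]
Since $\bm c$ is uniformly positive definite and $\alpha_T = h_T^{-1}>0$, this forces $\bm\sigma_h = \bm 0$ and $\lambda_h = \mathcal P_T^\partial u_h$ on $\partial T$ for every $T$.

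Next, returning to \eqref{eq:hdg-1} with $\bm\sigma_h=\bm 0$, integrating by parts elementwise, and using the projection identity $\langle u_h - \mathcal P_T^\partial u_h,\bm\tau_h\cdot\bm n\rangle_{\partial T}=0$ (valid because $\bm\tau_h\cdot\bm n|_F\in P_k(F)=M(F)$), I obtain
\[
  \sum_{T\in\mathcal T_h}(\bm\nabla u_h,\bm\tau_h)_T = 0 \quad\text{for all }\bm\tau_h\in\bm W_h.
\]
Since $u_h|_T\in P_{k+1}(T)$ gives $\bm\nabla u_h|_T\in[P_k(T)]^d=\bm W(T)$, choosing $\bm\tau_h$ to equal $\bm\nabla_h u_h$ piecewise shows $\bm\nabla u_h=\bm 0$ on each $T$, i.e.\ $u_h$ is piecewise constant.

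Finally, since $M(F)\supseteq P_0(F)$, the projection of a piecewise constant is itself, so $\lambda_h|_F = u_h|_{T}$ for every face $F\subset\partial T$. Single-valuedness of $\lambda_h$ across interior faces then forces $u_h$ to be globally constant on $\Omega$ (which is connected), and the boundary constraint $\lambda_h\in M_h^0$ (tested against $\lambda_h$ itself extended to $M_h$) forces this constant to vanish. Hence $u_h=0$, $\lambda_h=0$, completing the uniqueness argument. The only mildly delicate step is the cancellation in the energy identity, which hinges on writing $\langle\alpha_T(\mathcal P_T^\partial u_h-\lambda_h),u_h-\lambda_h\rangle_{\partial T}$ as a square via the $L^2$-projection property; the rest is standard discrete bookkeeping.
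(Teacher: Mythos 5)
Your proof is correct and is exactly the standard energy argument this lemma calls for: test with the solution itself to get $\norm{\bm\sigma_h}_{\bm c}^2+\sum_T\alpha_T\norm{\mathcal P_T^\partial u_h-\lambda_h}_{\partial T}^2=0$, then use $\bm\nabla(P_{k+1}(T))\subset[P_k(T)]^d=\bm W(T)$ and the single-valuedness of $\lambda_h$ to propagate $u_h\equiv\text{const}=0$. The paper omits the proof entirely (it only remarks that the result is "easy to obtain"), so there is nothing to compare against, but your argument is surely the intended one and all the delicate points — the square-system reduction after translating $M_h(g)$ by a particular element, the projection identity turning the cross term into a square, and the vanishing of the constant via the $M_h^0$ constraint — are handled correctly.
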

\section{Error analysis}\label{sec_conv}
We first introduce some notation and conventions. In the rest of this paper, we
shall use the standard definitions of Sobolev spaces and their (semi-)norms
\cite{ADAMS}, namely, for an arbitrary open set $D\subset\mathbb R^d$ and any
positive integer $s$,
\begin{align*}
  H^s(D) &:= \{v\in L^2(D): \partial^{\alpha}v \in L^2(D)\text{ for all $|\alpha| \leqslant s$}\},\\
  \norm{v}_{s,D} &:= \left(
    \sum_{|\alpha|\leqslant s}\int_{D}|\partial^{\alpha}v|^2
  \right)^\frac12, \quad
  \verti{v}_{s,D} := \left(
    \sum_{|\alpha|= s}\int_{D}|\partial^{\alpha}v|^2
  \right)^\frac12 \quad\text{for all $v\in H^s(D) $.}
\end{align*}
We use $ \norm{\cdot}_D$ and $ \norm{\cdot}_{\partial D}$ to denote the
$L^2$-norms in $L^2(D) $ and $L^2(\partial D) $ respectively; in particular, we
shall use $ \norm{\cdot}$ to abbreviate $ \norm{\cdot}_\Omega$. For any
$T\in\mathcal T_h $ and nonnegative integer $j$ , let $ \mathcal P_T^j:L^2(T)\to
P_j(T) $ be the standard $L^2$-orthogonal projection operator, and define the operator $
\mathcal P_h^j$ by
\[
  (\mathcal P_h^j v)|_T := \mathcal P_T^j v \quad
  \text{for all $T \in \mathcal T_h $ and $ v\in L^2(\Omega) $.}
\]

In the rest of this paper, $x \lesssim y$ (or $x \gtrsim y$) denotes that there
exists a positive constant $C$ such that $x \leqslant Cy$ (or $x\geqslant Cy$),
where $C$ only depends on $ \bm c$, $k$, $ \Omega$, or the regularity of $ \mathcal
T_h $. The notation $x \sim y$ abbreviates $x \lesssim y\lesssim x$.

\subsection{Auxiliary interpolation operators}
Define
\[
  V_h^\text{c} := \left\{
    v_h^c\in H^1_0(\Omega):
    v_h^c|_T\in P_1(T) \text{ for all $T\in\mathcal T_h $}
  \right\}.
\]
For any $T\in\mathcal T_h $, let $ \lambda_1,\lambda_2,\ldots,\lambda_{d+1}$ be
the conventional barycentric coordinate functions defined on $T$. Then we denote
\begin{equation}
  \label{eq:def-Gamma-T}
  \Gamma(T):= S_1(T)+S_2(T)+\cdots+S_{d+1}(T),
\end{equation}
where
\[
  S_i(T) := \left( \prod_{j\neq i}\lambda_j \right) \text{span}
  \left\{
    \prod_j\lambda_j^{\alpha_j}:\sum_j\alpha_j=k,\alpha_i=0
  \right\},
  \quad i=1,2,\ldots,d+1.
\]

We define  the first  interpolation operator $ \Pi_h^0:
L^2(\cup_{F\in\mathcal F_h}F) \to V_h^{\text{c}} $  as follows: for any $ \mu \in
L^2(\cup_{F\in\mathcal F_h}F) $, $ \Pi_h^0\mu $ satisfies
\[
  \begin{cases}
    \Pi_h^0\mu(a) =
    \frac1{\#\omega_{a}} \sum_{T\in\omega_{a}}m_T(\mu) &
    \text{ if $a$ is an interior node of $ \mathcal T_h $, } \\
    \Pi_h^0\mu(a) = 0 &
    \text{ if $a$ is a boundary node of $ \mathcal T_h $.}
  \end{cases}
\]
Here $ \omega_a := \left\{T\in\mathcal T_h:a\text{ is a vertex of
$T$}\right\} $, $ \#\omega_{a}$ denotes the number of elements in $ \omega_{a}$, and
$m_T(\cdot):L^2(\partial T)\to\mathbb R$ is given by
\begin{equation}
  \label{eq:def-m_T}
  m_T(\mu) := \frac1{d+1} \sum_{F\in\mathcal F_T}
  \frac1{|F|} \int_F \mu \quad
  \text{for all $ \mu\in L^2(\partial T) $,}
\end{equation}
where $ \mathcal F_T := \left\{ F: F \text{ is a face of $T$} \right\} $, and $
\verti{F} $ denotes the $ (d-1) $-dimensional Hausdorff measure.

We proceed to define the second interpolation operator $ \Pi_h^1 $ on $
L^2(\Omega)\times L^2(\cup_{F\in\mathcal F_h}F) $. Let us start by defining a
local version $ \Pi_T^1 $ of $ \Pi_h^1 $ for all $ T\in\mathcal T_h $. For any $
(v,\mu)\in L^2(T)\times L^2(\partial T) $, by the
definition~\eqref{eq:def-Gamma-T} of $ \Gamma(T) $, it is easy to see that there
exists a unique $w_1\in\Gamma(T) $ such that
\[
  \int_F w_1 q = \int_F \mu q \quad
  \text{for all $ q\in P_k(F) $ and $ F\in\mathcal F_T $,}
\]
and that there exists a unique $ w_2 \in
(\prod_j\lambda_j)P_k(T) $ such that
\[
  \int_T w_2 q = \int_T(v-w_1)q \quad \text{for all $q\in P_k(T) $,}
\]
and then we define $ \Pi_T(v,\mu) := w_1 + w_2 $. Now we define $ \Pi_h^1 $ as
follows: for any $ (v,\mu) \in L^2(\Omega) \times L^2(\cup_{F\in\mathcal F_h}F)
$ and $ T\in\mathcal T_h $,
\[
  \Pi_h^1(v,\mu)|_T := \Pi_T^1(v,\mu).
\]

Finally, based on the operators $ \Pi_h^0$ and $ \Pi_h^1 $, we define the third
interpolation operator $ \Pi_h $ on $ L^2(\Omega)\times L^2(\cup_{F\in\mathcal
F_h}F) $ by
\begin{equation}
  \label{eq:def-Pi_h}
  \Pi_h(v,\mu) := \Pi_h^0\mu + \Pi_h^1(v-\Pi^0_h\mu,\mu-\Pi^0_h\mu)
  \quad \text{
    for all $ (v,\mu)\in L^2(\Omega)\times L^2(\cup_{F\in\mathcal F_h}F) $.
  }
\end{equation}

The following two lemmas show some properties of the above interpolation
operators.
\begin{lem}
  \label{lem:Pi_h1}
  For any $ (v,\mu) \in L^2(\Omega) \times L^2(\cup_{F \in \mathcal F_h}F) $, it
  holds
  \[
    \norm{\Pi_h^1(v,\mu)}_T \lesssim
    \norm{v}_T + h_T^{\frac 12}\norm{\mu}_{\partial T}
    \quad \text{for all $ T \in \mathcal T_h $.}
  \]
\end{lem}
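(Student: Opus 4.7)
The plan is a standard scaling argument: reduce the bound to a reference simplex, where it is immediate from the equivalence of norms on finite-dimensional spaces, and then rescale to $T$.

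First I would fix $T\in\mathcal T_h$ and an affine map $F_T:\hat T\to T$ from a fixed reference simplex $\hat T$ onto $T$, and pull back $\hat v:=v\circ F_T$ and $\hat\mu:=\mu\circ F_T|_{\partial\hat T}$. Since $F_T$ maps barycentric coordinates to barycentric coordinates, the spaces $\Gamma(T)$ and $\bigl(\prod_j\lambda_j\bigr)P_k(T)$ pull back to their counterparts on $\hat T$. Moreover, both integral identities defining $w_1$ and $w_2$ only test against polynomials of degree $\leqslant k$, so the constant Jacobian factors cancel on each face and in the interior. Hence $\widehat{\Pi_T^1(v,\mu)}=\Pi_{\hat T}^1(\hat v,\hat\mu)$.

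On $\hat T$, the spaces $\Gamma(\hat T)$ and $\bigl(\prod_j\lambda_j\bigr)P_k(\hat T)$ have fixed finite dimension, so all norms on them are equivalent and the two linear maps $\hat\mu\mapsto\hat w_1$ and $(\hat v,\hat w_1)\mapsto\hat w_2$ are automatically bounded. This yields
\[
  \norm{\Pi_{\hat T}^1(\hat v,\hat\mu)}_{\hat T}\lesssim\norm{\hat v}_{\hat T}+\norm{\hat\mu}_{\partial\hat T}.
\]
Applying the usual affine scalings $\norm{u}_T\sim h_T^{d/2}\norm{\hat u}_{\hat T}$ and $\norm{u}_{\partial T}\sim h_T^{(d-1)/2}\norm{\hat u}_{\partial\hat T}$ then introduces the factor $h_T^{1/2}$ in front of the boundary norm and gives the claimed estimate.

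The one delicate point is the well-posedness on $\hat T$ of the Gram systems defining $w_1$ and $w_2$, which is what makes the maps in the previous paragraph bounded in the first place. For $w_1$, on a face $F_i$ opposite vertex $i$ only the summand $S_i(\hat T)$ is nonzero, and elements there restrict to $\bigl(\prod_{j\neq i}\lambda_j\bigr)p$ with $p\in P_k(F_i)$; the defining equation reads $\int_{F_i}\bigl(\prod_{j\neq i}\lambda_j\bigr)p\,q\,ds=\int_{F_i}\hat\mu\,q\,ds$ for every $q\in P_k(F_i)$, and invertibility follows by testing with $q=p$ since the face bubble is strictly positive on the interior of $F_i$. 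The analogous argument using the interior bubble $\prod_j\lambda_j$ handles $w_2$. Once these two Gram matrices are shown nonsingular, no delicate estimates remain and the rest is bookkeeping of the scaling.
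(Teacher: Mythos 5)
Your argument is correct and is exactly the ``standard scaling argument'' that the paper invokes for Lemma~\ref{lem:Pi_h1} without giving details: pull back to the reference simplex (where the defining moment conditions are Jacobian-invariant), use norm equivalence on the finite-dimensional spaces $\Gamma(\hat T)$ and $\bigl(\prod_j\lambda_j\bigr)P_k(\hat T)$ together with the positivity of the face and interior bubbles to get boundedness, and rescale to produce the $h_T^{1/2}$ factor. Nothing further is needed.
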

\begin{lem}
  \label{lem:basic-Pi_h}
  For any $ (v,\mu) \in L^2(\Omega) \times L^2(\cup_{F \in \mathcal F_h}F) $
  and $ T \in \mathcal T_h $, it holds
  \begin{align}
    \big(\Pi_h(v,\mu), q\big)_T = (v, q)_T &
    \quad\text{for all $ q \in P_k(T) $,} \\
    \dual{\Pi_h(v,\mu),q}_{\partial T} = \dual{\mu,q}_{\partial T} &
    \quad\text{for all $ q \in P_k(F) $ and $ F \in \mathcal F_T$.}
  \end{align}
\end{lem}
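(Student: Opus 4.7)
The plan is to unfold the definition of $\Pi_h$ and invoke the defining moment conditions that characterize $w_1$ and $w_2$ in the local construction of $\Pi_h^1$. The two observations that drive the whole argument are: (a) any element of $(\prod_j \lambda_j) P_k(T)$ vanishes on $\partial T$, because each face of $T$ is contained in the zero set of some $\lambda_j$; consequently the ``bubble'' part $w_2$ of $\Pi_T^1$ contributes nothing to the boundary moments; and (b) the face part $w_1 \in \Gamma(T)$ is, by construction, a direct sum $\sum_i S_i(T)$ where $S_i(T)$ has vanishing trace on every face except the one opposite vertex $i$, so the face moments of $w_1$ against $P_k(F)$ are exactly those of the input $\mu$.

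\textbf{Step 1 (volumetric moments).} I would write
\[
  (\Pi_h(v,\mu), q)_T
  = (\Pi_h^0 \mu, q)_T + \bigl(\Pi_h^1(v - \Pi_h^0\mu,\,\mu - \Pi_h^0\mu),\, q\bigr)_T
\]
and, for $q\in P_k(T)$, decompose the second term as $(w_1,q)_T + (w_2,q)_T$. Applying the defining property of $w_2$ with input $v - \Pi_h^0\mu - w_1$ gives $(w_2,q)_T = (v - \Pi_h^0\mu - w_1,q)_T$, and the two $w_1$ terms cancel. Adding the first term back recovers $(v,q)_T$.

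\textbf{Step 2 (boundary moments).} Fix $F \in \mathcal F_T$ and $q \in P_k(F)$, extended by zero to the remaining faces so that I can work face-by-face. I would split
\[
  \dual{\Pi_h(v,\mu), q}_F
  = \dual{\Pi_h^0 \mu, q}_F + \dual{w_1 + w_2,\, q}_F,
\]
observe that $w_2$ vanishes on $F$ by observation (a), and invoke the face moment definition of $w_1$ (with input $\mu - \Pi_h^0\mu$) to rewrite $\dual{w_1,q}_F = \dual{\mu - \Pi_h^0\mu, q}_F$. The $\Pi_h^0 \mu$ contributions cancel and I am left with $\dual{\mu,q}_F$.

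\textbf{Main obstacle.} The argument is essentially bookkeeping: the only non-routine point is justifying that the bubble functions $(\prod_j \lambda_j) P_k(T)$ vanish on $\partial T$ and that the $\Gamma(T)$ decomposition in (\ref{eq:def-Gamma-T}) allows the face moments $\int_F w_1\, q$ to be prescribed independently on each face (so the definition of $w_1$ is well-posed). Once these structural facts about $\Gamma(T)$ and the bubble space are in place, both identities follow immediately by unwinding the definition in (\ref{eq:def-Pi_h}) and using the defining moment conditions of $w_1$ and $w_2$.
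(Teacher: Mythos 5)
Your proof is correct and is exactly the argument the paper intends: the paper offers no written proof beyond the remark that the lemma "follows from the definition of $\Pi_h$", and your unfolding of that definition — cancellation of the $w_1$ and $\Pi_h^0\mu$ contributions in the volume moments, vanishing of the bubble part $w_2$ on $\partial T$, and the prescribed face moments of $w_1$ — is the straightforward verification being alluded to. No gaps.
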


 Lemma~\ref{lem:Pi_h1} follows from a standard scaling
argument, and Lemma~\ref{lem:basic-Pi_h} follows from   the definition of $ \Pi_h $.

\subsection{Error estimation for numerical flux}

In this subsection, we shall follow the basic idea in \cite{Gudi;2010} to give
an error estimate for the numerical flux. We stress that we only need to use the
following minimal regularity condition of the problem~\eqref{eq:model}:
\begin{equation}
  \label{eq:reg_cond}
  u\in H^1(\Omega) \text{ and } \bm\sigma\in H(\ddiv;\Omega).
\end{equation}

Define
\begin{equation}
  \label{eq:def-e_h}
  \bm e_h^{\bm\sigma} := \bm\sigma_h - \mathcal P_h^k \bm\sigma,
  \quad e_h^u: = u_h - \mathcal P_h^{k+1} u,
  \quad e_h^\lambda: = \lambda_h - \mathcal P_M u,
\end{equation}
where, $ \mathcal P_M: H^1(\Omega) \to M_h $ is given by
\[
  \sum_{T \in \mathcal T_h}
  \dual{\mathcal P_M v,\mu_h}_{\partial T} :=
  \sum_{T \in \mathcal T_h}
  \dual{v,\mu_h}_{\partial T} \quad
  \text{for all $ \mu_h \in M_h $,}
\]
namely,
\[
  (\mathcal P_M v)|_{\partial T} := \mathcal P_T^\partial (v|_T)
  \quad \text{for all $ T \in \mathcal T_h $.}
\]
In addition, by~\eqref{eq:model} and~\eqref{eq:hdg-1}, it is easy to verify that
\begin{equation}
  \label{eq:err-1}
  (\bm c\bm e_h^{\bm\sigma}, \bm\tau_h) + (e_h^u,\ddiv_h\bm\tau_h) -
  \sum_{T \in \mathcal T_h} \dual{e_h^\lambda, \bm\tau_h\cdot\bm n}_{\partial T}
  =
  \big(
    \bm c(I-\mathcal P_h^k)\bm\sigma, \bm\tau_h
  \big)
\end{equation}
for all $ \bm\tau_h \in \bm W_h $.

We introduce a semi-norm $ \vertiii{\cdot}_h: V_h\times M_h \times \bm W_h\to\mathbb R$ by
\begin{equation}\label{eq:def_vertiii_h}
  \vertiii{(v_h,\mu_h, \bm\tau_h)}_h: =
  \left(
    \norm{\bm\tau_h}_{\bm c}^2 + \sum_{T\in\mathcal T_h}
    \norm{\alpha_T^{\frac{1}{2}}(\mathcal P_T^\partial  v_h - \mu_h)}_{\partial T}^2
  \right)^{\frac{1}{2}}
\end{equation}
for all $(v_h,\mu_h,\bm\tau_h) \in V_h\times M_h \times \bm W_h $, where
\[
  \norm{\bm \tau}_{\bm c}: = (\bm c \bm \tau, \bm\tau)^{\frac{1}{2}}
  \quad\text{for all $ \bm \tau \in [L^2(\Omega)]^d$.}
\]
We also set
\[
  \verti{w}_{1,h} := \left(
    \sum_{T \in \mathcal T_h} \verti{w}_{1,T}^2
  \right)^\frac12
\]
for any
\(
  w \in L^2(\Omega)\) with $w|_T \in H^1(T) $ for all $ T \in \mathcal T_h $. 

Now we are ready to state 
the main result of this subsection.
\begin{thm}
  \label{thm:conv_sigma}
  It holds
  \begin{equation}\label{eq:conv_sigma-1}
    \vertiii{(e_h^u,e_h^{\lambda}, \bm e_h^{\bm\sigma})}_h
    \lesssim
    h\norm{(I-\mathcal P_h^k)f} +
    \norm{(I-\mathcal P_h^k)\bm\sigma} +
    \verti{(I-\mathcal P_h^{k+1})u}_{1,h},
  \end{equation}
  which implies
  \begin{equation}\label{eq:conv_sigma-2}
    \norm{\bm\sigma-\bm\sigma_h} \lesssim h
    \norm{(I-\mathcal P_h^k)f} +
    \norm{(I-\mathcal P_h^k)\bm\sigma} +
    \verti{(I-\mathcal P_h^{k+1})u}_{1,h}.
  \end{equation}
\end{thm}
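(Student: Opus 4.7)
The plan is to derive an energy identity for $\vertiii{(e_h^u,e_h^{\lambda},\bm e_h^{\bm\sigma})}_h^2$ and then bound the right-hand side under the minimal regularity using elementwise integration by parts together with the interpolation operators $\Pi_h^0,\Pi_h^1$ of Section~3.1. First I would test the flux error equation~\eqref{eq:err-1} with $\bm\tau_h=\bm e_h^{\bm\sigma}$, the discrete balance~\eqref{eq:hdg-2} with $v_h=e_h^u$, and the transmission condition~\eqref{eq:hdg-3} with $\mu_h=e_h^{\lambda}$ (valid since $\lambda_h$ and $\mathcal P_M u$ both lie in $M_h(g)$, so $e_h^{\lambda}\in M_h^0$). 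Combining the three identities with appropriate signs, the $(e_h^u,\ddiv_h\bm\sigma_h)$ and $\bint{\bm\sigma_h\cdot\bm n}{e_h^{\lambda}}$ pieces telescope; then $f=-\ddiv\bm\sigma$ and elementwise integration by parts (where $\nabla e_h^u\in[P_k(T)]^d$ absorbs $\mathcal P_h^k$ in the volume pairings) collapse $(f,e_h^u)+(e_h^u,\ddiv_h\mathcal P_h^k\bm\sigma)$ into a boundary pairing against $(I-\mathcal P_h^k)\bm\sigma\cdot\bm n$, while single-valuedness of $e_h^{\lambda}$ together with $\bm\sigma\in H(\ddiv;\Omega)$ allow me to eliminate $\sum_T\dual{e_h^{\lambda},\bm\sigma\cdot\bm n}_{\partial T}$. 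The net result should be an identity of the form
\[
\vertiii{(e_h^u,e_h^{\lambda},\bm e_h^{\bm\sigma})}_h^2
   = (\bm c(I-\mathcal P_h^k)\bm\sigma,\bm e_h^{\bm\sigma})
     - \sum_T\bigl\langle e_h^u - e_h^{\lambda},(I-\mathcal P_h^k)\bm\sigma\cdot\bm n\bigr\rangle_{\partial T}
     + \sum_T\bigl\langle\alpha_T\mathcal P_T^{\partial}(I-\mathcal P_h^{k+1})u,\,\mathcal P_T^{\partial}e_h^u-e_h^{\lambda}\bigr\rangle_{\partial T}.
\]

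\textbf{Easy pieces.} The first term is immediate from Cauchy--Schwarz and $\bm c$-coercivity, producing $\|(I-\mathcal P_h^k)\bm\sigma\|\cdot\vertiii{\cdot}_h$. The third is handled by Cauchy--Schwarz against the penalty part of $\vertiii{\cdot}_h$; a scaled trace inequality together with the zero-mean property of $(I-\mathcal P_T^{k+1})u$ and a Poincar\'e estimate absorb the $h_T^{-1}$ in $\alpha_T$ and give the factor $|(I-\mathcal P_h^{k+1})u|_{1,h}\cdot\vertiii{\cdot}_h$.

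\textbf{The main obstacle.} The delicate middle term is the real challenge: under $\bm\sigma\in H(\ddiv;\Omega)$ only, $(I-\mathcal P_h^k)\bm\sigma\cdot\bm n$ makes sense merely in $H^{-1/2}(\partial T)$, so a direct $L^2$ Cauchy--Schwarz on $\partial T$ is unavailable. The plan is to decompose
\[
e_h^u - e_h^{\lambda} = (e_h^u - \mathcal P_T^{\partial}e_h^u) + (\mathcal P_T^{\partial}e_h^u - e_h^{\lambda}),
\]
and exploit orthogonality: the first piece is orthogonal on each face to $M(F)=P_k(F)$ (hence to $\mathcal P_T^k\bm\sigma\cdot\bm n\in P_k(F)$), while the second lies in $M(\partial T)$. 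For each piece I lift the face data into the interior using $\Pi_h^1(\cdot,\cdot)$ (and, for the globally single-valued piece $\mathcal P_T^{\partial}e_h^u-e_h^{\lambda}$, also the conforming companion $\Pi_h^0$ whose boundary values vanish, so that global integration by parts against $\bm\sigma\in H(\ddiv;\Omega)$ is meaningful). Elementwise integration by parts then converts the boundary pairing into a volume pairing $(\nabla(\text{lift}),(I-\mathcal P_T^k)\bm\sigma)_T-(\text{lift},f+\ddiv\mathcal P_T^k\bm\sigma)_T$; the interior-moment-killing property of Lemma~\ref{lem:basic-Pi_h} annihilates the $P_k(T)$-part of $f+\ddiv\mathcal P_T^k\bm\sigma$, reducing the second volume pairing to $(\text{lift},(I-\mathcal P_T^k)f)_T$, which by the stability bound of Lemma~\ref{lem:Pi_h1} and Cauchy--Schwarz is controlled by $h\|(I-\mathcal P_h^k)f\|\cdot\vertiii{\cdot}_h$; the first volume pairing is estimated by $\|(I-\mathcal P_h^k)\bm\sigma\|\cdot\vertiii{\cdot}_h$ via the same lifting bound combined with an inverse estimate on $\nabla(\text{lift})$.

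\textbf{Conclusion.} Summing all contributions and absorbing one factor of $\vertiii{\cdot}_h$ yields~\eqref{eq:conv_sigma-1}. Then~\eqref{eq:conv_sigma-2} follows from the triangle inequality $\|\bm\sigma-\bm\sigma_h\|\le\|(I-\mathcal P_h^k)\bm\sigma\|+\|\bm e_h^{\bm\sigma}\|$ together with the uniform positive-definiteness of $\bm c$, which gives $\|\bm e_h^{\bm\sigma}\|\lesssim\|\bm e_h^{\bm\sigma}\|_{\bm c}\le\vertiii{\cdot}_h$. The main technical burden is the lifting-based treatment of the $(I-\mathcal P_h^k)\bm\sigma\cdot\bm n$ boundary pairing; in particular, recovering the correct power $h$ in front of $\|(I-\mathcal P_h^k)f\|$ relies crucially on the interior-moment-killing property of $\Pi_h^1$ combined with the stability bound $\|\Pi_h^1(\cdot,\mu)\|_T\lesssim h_T^{1/2}\|\mu\|_{\partial T}$ from Lemma~\ref{lem:Pi_h1}.
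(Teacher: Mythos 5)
Your starting point---testing \eqref{eq:err-1}, \eqref{eq:hdg-2}, \eqref{eq:hdg-3} with $(\bm e_h^{\bm\sigma},e_h^u,e_h^\lambda)$ and summing---is exactly the paper's, and your handling of the coefficient-oscillation term and the penalty term, as well as the deduction of \eqref{eq:conv_sigma-2} from \eqref{eq:conv_sigma-1}, are fine. The gap is in the middle term. First, the ``identity'' you write is not well defined under the minimal regularity: $e_h^\lambda|_{\partial T}$ and $\mathcal P_T^\partial e_h^u$ are discontinuous across the edges separating the faces of $T$, hence do not lie in $H^{1/2}(\partial T)$, while $\bm\sigma\cdot\bm n|_{\partial T}$ is only in $H^{-1/2}(\partial T)$; so $\sum_T\dual{e_h^u-e_h^\lambda,(I-\mathcal P_h^k)\bm\sigma\cdot\bm n}_{\partial T}$ has no meaning, and your proposed repair does not restore it: replacing the face data by an interior lifting \emph{changes} the pairing, because $\Pi_T^1$ reproduces only the moments against $P_k(F)$ on each face, whereas $(I-\mathcal P_h^k)\bm\sigma\cdot\bm n$ is not a facewise polynomial of degree $k$. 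The paper never forms this pairing: it keeps the term in the purely volumetric, well-defined form $(f,e_h^u)+(e_h^u,\ddiv_h\mathcal P_h^k\bm\sigma)-\sum_T\dual{\mathcal P_T^k\bm\sigma\cdot\bm n,e_h^\lambda}_{\partial T}=(f,e_h^u)-(\bm\eta_h,\bm\sigma)$ with $\bm\eta_h\in\bm W_h$ defined by \eqref{eq:def_eta_h}, and the only integration by parts ever performed against $\bm\sigma$ is the global one with the conforming function $\Pi_h(e_h^u,e_h^\lambda)\in H_0^1(\Omega)$, which is legitimate for $\bm\sigma\in H(\ddiv;\Omega)$. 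Your plan must be reorganized into that volume form \emph{before} any duality is invoked.

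Second, and more substantively, you have not accounted for the quantitative control of the liftings. Lemma~\ref{lem:Pi_h1} plus an inverse estimate controls only the piece lifted from the penalized residual $\mathcal P_T^\partial e_h^u-e_h^\lambda$, which is indeed directly dominated by $\vertiii{\cdot}_h$. The conforming companion $\Pi_h^0 e_h^\lambda$ and the quantities actually needed in the final Cauchy--Schwarz step are not controlled by the energy norm a priori; the estimate
\[
  \verti{\Pi_h(e_h^u,e_h^\lambda)}_{1,\Omega}+\norm{\bm\eta_h}
  +\Big(\sum_{T\in\mathcal T_h} h_T^{-2}\norm{e_h^u-\Pi_h(e_h^u,e_h^\lambda)}_T^2\Big)^{1/2}
  \lesssim \vertiii{(e_h^u,e_h^\lambda,\bm e_h^{\bm\sigma})}_h+\norm{(I-\mathcal P_h^k)\bm\sigma}
\]
is the technical core of the paper's proof (Lemmas~\ref{lem:key}--\ref{lem:lbjlxy}). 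Establishing it requires returning to the error equation \eqref{eq:err-1} twice---once tested with $\bm\nabla_h e_h^u$ to bound $\verti{e_h^u}_{1,h}$, and once tested with BDM normal-trace liftings (a $P_1$ harmonic lift when $k=0$) to bound $h_T^{-1/2}\norm{e_h^\lambda-m_T(e_h^\lambda)}_{\partial T}$---followed by a Cl\'ement-type summation over vertex patches to control $\Pi_h^0e_h^\lambda-m_T(e_h^\lambda)$. None of this follows from the stability bound of Lemma~\ref{lem:Pi_h1} and Cauchy--Schwarz alone, and without it the factor $\vertiii{\cdot}_h$ on the right-hand side of your bounds for the $(I-\mathcal P_h^k)f$ and $(I-\mathcal P_h^k)\bm\sigma$ contributions cannot be justified.
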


By the estimate \eqref{eq:conv_sigma-2} and   standard approximation properties of the $ L^2
$-orthogonal projection, we readily obtain the corollary below.
\begin{coro}\label{coro1}
  Suppose that $ f \in H^s(\Omega) $, $ \bm\sigma \in H^{s+1}(\Omega) $, and $ u
  \in H^{s+2}(\Omega) $ for a nonnegative integer $ s $.  Then it holds
  \begin{equation}
    \norm{\bm\sigma - \bm\sigma_h} \lesssim h^{\min\{s+1,k+1\}}
    \left(
      \norm{f}_{s,\Omega} + \norm{\bm\sigma}_{s+1,\Omega} +
      \norm{u}_{s+2,\Omega}
    \right).
  \end{equation}
\end{coro}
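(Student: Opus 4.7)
The plan is to start directly from the estimate~\eqref{eq:conv_sigma-2} supplied by Theorem~\ref{thm:conv_sigma} and bound each of its three right-hand side terms by invoking the standard elementwise approximation property of $L^2$-orthogonal projections onto polynomial spaces: for $v \in H^m(T)$ and a nonnegative integer $j$,
\[
  \norm{(I - \mathcal P_T^j) v}_T \lesssim h_T^{\min(m,j+1)} \verti{v}_{\min(m,j+1),T},
  \qquad
  \verti{(I - \mathcal P_T^j) v}_{1,T} \lesssim h_T^{\min(m-1,j)} \verti{v}_{\min(m,j+1),T}.
\]
Since no new HDG-specific machinery is needed, the corollary reduces to bookkeeping of exponents.

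Applying these to each of the three terms in turn: for the flux term, $\bm\sigma \in H^{s+1}(\Omega)$ and the projection is onto $P_k$, so elementwise $\norm{(I-\mathcal P_T^k)\bm\sigma}_T \lesssim h_T^{\min(s+1,k+1)} \norm{\bm\sigma}_{s+1,T}$; squaring and summing over $T \in \mathcal T_h$ gives $\norm{(I-\mathcal P_h^k)\bm\sigma} \lesssim h^{\min(s+1,k+1)} \norm{\bm\sigma}_{s+1,\Omega}$. For the source term, $f \in H^s(\Omega)$ and again projection is onto $P_k$, hence $\norm{(I-\mathcal P_h^k) f} \lesssim h^{\min(s,k+1)} \norm{f}_{s,\Omega}$; multiplied by the extra factor of $h$ in~\eqref{eq:conv_sigma-2}, this contributes $h^{\min(s+1,k+2)} \norm{f}_{s,\Omega}$, and since $\min(s+1,k+2) \geqslant \min(s+1,k+1)$, the rate is at least the claimed one after absorbing a constant factor of $h$-independent size. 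For the potential term, $u \in H^{s+2}(\Omega)$ and projection is onto $P_{k+1}$, so the broken $H^1$-seminorm estimate yields $\verti{(I-\mathcal P_T^{k+1}) u}_{1,T} \lesssim h_T^{\min(s+1,k+1)} \norm{u}_{s+2,T}$, and summing elementwise in the definition of $\verti{\cdot}_{1,h}$ produces $\verti{(I-\mathcal P_h^{k+1}) u}_{1,h} \lesssim h^{\min(s+1,k+1)} \norm{u}_{s+2,\Omega}$.

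Collecting the three bounds and substituting into~\eqref{eq:conv_sigma-2} yields the asserted estimate with the common rate $h^{\min(s+1,k+1)}$. There is no real obstacle here, the only point requiring momentary care is the interaction between the prefactor $h$ and the $\mathcal P_h^k$-approximation of $f$, where $s=0$ gives exactly the rate $h^{\min(1,k+1)} = h$ matching $\min(s+1,k+1)$, confirming that the $f$-contribution never degrades the overall convergence order.
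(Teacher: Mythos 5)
Your proposal is correct and follows exactly the route the paper intends: the paper derives Corollary~\ref{coro1} directly from the estimate~\eqref{eq:conv_sigma-2} of Theorem~\ref{thm:conv_sigma} together with the standard approximation properties of the $L^2$-orthogonal projection, which is precisely your exponent bookkeeping for the three right-hand side terms. The only detail worth noting is that absorbing the extra power in $h^{\min\{s+1,k+2\}}\lesssim h^{\min\{s+1,k+1\}}$ uses the boundedness of $h$ from above, which is harmless.
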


To prove Theorem \ref{thm:conv_sigma}, we need some lemmas below.

\begin{lem}
  \label{lem:key}
  It holds
  \begin{equation}
    \label{auxiliary_ineq}
    \verti{e_h^u}_{1,h} \lesssim
    \vertiii{ (e_h^u, e_h^\lambda, \bm e_h^{\bm\sigma}) }_h +
    \norm{ (I-\mathcal P_h^k)\bm\sigma }.
  \end{equation}
\end{lem}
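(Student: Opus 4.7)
The plan is to test the error equation \eqref{eq:err-1} with $\bm\tau_h = \nabla_h e_h^u$, which is an admissible choice because $e_h^u = u_h - \mathcal P_h^{k+1}u$ is piecewise in $P_{k+1}(T)$, so $\nabla_h e_h^u$ is piecewise in $[P_k(T)]^d = \bm W(T)$. Before making this choice, I would integrate the volumetric term by parts element by element to produce the broken gradient:
\[
  (e_h^u,\ddiv_h\bm\tau_h) = -(\nabla_h e_h^u,\bm\tau_h) + \sum_{T \in \mathcal T_h}\dual{e_h^u,\bm\tau_h\cdot\bm n}_{\partial T}.
\]
Substituting into \eqref{eq:err-1} gives
\[
  (\bm c \bm e_h^{\bm\sigma},\bm\tau_h) - (\nabla_h e_h^u,\bm\tau_h) + \sum_{T \in \mathcal T_h}\dual{e_h^u - e_h^\lambda,\bm\tau_h\cdot\bm n}_{\partial T} = \big(\bm c(I-\mathcal P_h^k)\bm\sigma,\bm\tau_h\big).
\]

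Next I would deal with the boundary term. For $\bm\tau_h = \nabla_h e_h^u$, on each face $F \subset \partial T$ the quantity $\nabla e_h^u \cdot \bm n$ is in $P_k(F) = M(F)$, so by the defining property \eqref{projection1} of $\mathcal P_T^\partial$ we may insert that projection:
\[
  \dual{e_h^u - e_h^\lambda,\nabla e_h^u\cdot\bm n}_{\partial T}
  = \dual{\mathcal P_T^\partial e_h^u - e_h^\lambda,\nabla e_h^u\cdot\bm n}_{\partial T},
\]
where I also use that $e_h^\lambda|_{\partial T} \in M(\partial T)$ is unaffected by $\mathcal P_T^\partial$. Setting $\bm\tau_h = \nabla_h e_h^u$ and rearranging then yields
\[
  \norm{\nabla_h e_h^u}^2 = (\bm c \bm e_h^{\bm\sigma} - \bm c(I-\mathcal P_h^k)\bm\sigma,\nabla_h e_h^u) + \sum_{T \in \mathcal T_h}\dual{\mathcal P_T^\partial e_h^u - e_h^\lambda,\nabla e_h^u \cdot\bm n}_{\partial T}.
\]

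The rest is routine bounding. For the volumetric pairings I would use Cauchy--Schwarz together with the spectral equivalence $\norm{\cdot}_{\bm c} \sim \norm{\cdot}$ to produce $\norm{\bm e_h^{\bm\sigma}}_{\bm c} + \norm{(I-\mathcal P_h^k)\bm\sigma}$ times $\norm{\nabla_h e_h^u}$. For the boundary sum I would apply Cauchy--Schwarz elementwise and the standard discrete trace inequality
\[
  \norm{\nabla e_h^u\cdot\bm n}_{\partial T} \lesssim h_T^{-1/2}\norm{\nabla e_h^u}_T,
\]
combined with $\alpha_T = h_T^{-1}$, to bound the sum by
\[
  \Bigl(\sum_{T \in \mathcal T_h}\norm{\alpha_T^{1/2}(\mathcal P_T^\partial e_h^u - e_h^\lambda)}_{\partial T}^2\Bigr)^{1/2}\norm{\nabla_h e_h^u}.
\]
Since both $\norm{\bm e_h^{\bm\sigma}}_{\bm c}$ and the jump-penalty sum are controlled by $\vertiii{(e_h^u,e_h^\lambda,\bm e_h^{\bm\sigma})}_h$, cancelling one factor of $\norm{\nabla_h e_h^u} = \verti{e_h^u}_{1,h}$ yields \eqref{auxiliary_ineq}.

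The only subtle step is recognizing that $\nabla e_h^u\cdot\bm n$ lies exactly in $M(F)$ so that $\mathcal P_T^\partial$ can be inserted into the boundary term; without this observation, the pure boundary difference $e_h^u - e_h^\lambda$ (which is not projected by $\mathcal P_T^\partial$) would not be directly controllable by the triple norm. Everything else is Cauchy--Schwarz, inverse/trace estimates on polynomial spaces, and the norm equivalence $\norm{\cdot}\sim\norm{\cdot}_{\bm c}$.
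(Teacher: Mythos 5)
Your proposal is correct and follows essentially the same route as the paper: test \eqref{eq:err-1} with $\bm\tau_h=\bm\nabla_h e_h^u$, integrate by parts elementwise, and bound the boundary sum via the trace inequality $h_T\norm{\bm\nabla e_h^u}_{\partial T}^2\lesssim \verti{e_h^u}_{1,T}^2$ together with $\alpha_T=h_T^{-1}$. You also make explicit the step the paper leaves implicit, namely that $\bm\nabla e_h^u\cdot\bm n|_F\in P_k(F)=M(F)$ allows replacing $e_h^u$ by $\mathcal P_T^\partial e_h^u$ in the boundary pairing, which is exactly why the triple norm controls that term.
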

\begin{proof}
  By~\eqref{eq:err-1} and integration by parts, we get that
  \[
    \sum_{ T \in \mathcal T_h } (\bm\nabla e_h^u, \bm\tau_h)_T =
    ( \bm c\bm e_h^{\bm\sigma},\bm\tau_h ) +
    \sum_{ T \in \mathcal T_h } \dual{
      \mathcal P_T^\partial e_h^u - e_h^\lambda,
      \bm\tau_h\cdot\bm n
    }_{\partial T} -
    \big(
      \bm c(I-\mathcal P_h^k)\bm\sigma,
      \bm\tau_h 
    \big)
  \]
  for all $ \bm\tau_h \in \bm W_h $. Taking $ \bm\tau_h := \bm\nabla_h e_h^u $
  (i.e., $ \bm\tau_h|_T := \bm\nabla (e_h^u|_T) $ for all $ T \in \mathcal T_h $)
  in the above equation, and using the Cauchy-Schwarz inequality, we obtain
  \begin{align*}
    \verti{e_h^u}_{1,h}^2
    ={} &
    (\bm c \bm e_h^{\bm\sigma}, \bm\nabla_h e_h^u) +
    \sum_{T \in \mathcal T_h}
    \dual{
      \mathcal P_T^\partial e_h^u - e_h^\lambda,
      \bm\nabla e_h^u\cdot\bm n
    }_{\partial T} - 
    \big(
      \bm c(I-\mathcal P_h^k)\bm\sigma,
      \bm\nabla_h e_h^u
    \big) \\
    \lesssim{} &
    \norm{\bm e_h^{\bm\sigma}} \verti{e_h^u}_{1,h} +
    \sum_{T \in \mathcal T_h}
    \norm{\mathcal P_T^\partial e_h^u - e_h^\lambda}_{\partial T}
    \norm{\bm\nabla e_h^u}_{\partial T} +
    \norm{(I-\mathcal P_h^k)\bm\sigma} \verti{e_h^u}_{1,h} \\
    \lesssim{} &
    \left(
      \norm{\bm e_h^{\bm\sigma}} +
      \norm{(I-\mathcal P_h^k)\bm\sigma}
    \right)
    \verti{e_h^u}_{1,h} +
    \left(
      \sum_{T \in \mathcal T_h}
      h_T^{-1}\norm{\mathcal P_T^\partial e_h^u - e_h^\lambda}_{\partial T}^2
    \right)^{\frac 12}
    \left(
      \sum_{T \in \mathcal T_h}
      h_T\norm{\bm\nabla e_h^u}_{\partial T}^2
    \right)^{\frac 12}.
  \end{align*}
  Noting that a standard scaling argument gives
  \[
    h_T\norm{\bm\nabla e_h^u}_{\partial T}^2 \lesssim
    \verti{e_h^u}_{1,T}^2,
  \]
  it follows that
  \[
    \verti{e_h^u}_{1,h}
    \lesssim
    \norm{ \bm e_h^{\bm\sigma} } + \norm{ (I-\mathcal P_h^k) \bm\sigma } +
    \left(
      \sum_{T \in \mathcal T_h}
      h_T^{-1} \norm{
        \mathcal P_T^\partial e_h^u - e_h^\lambda
      }_{\partial T}^2
    \right)^\frac12.
  \]
  By the definition~\eqref{eq:def_vertiii_h} of $ \vertiii{\cdot}_h $, we readily
  obtain \eqref{auxiliary_ineq}, and thus complete the proof.
\end{proof}

\begin{lem}\label{lem:basic}
  It holds 
  \begin{equation}
    \left(\sum_{T\in\mathcal T_h}
    h_T^{-2}\norm{e_h^u-m_T(e_h^{\lambda})}_T^2
    +h_T^{-1}\norm{e_h^{\lambda}-m_T(e_h^{\lambda})}_{\partial T}^2
  \right)^\frac12
  \lesssim
  \vertiii{(e_h^u,e_h^{\lambda},\bm e_h^{\bm\sigma})}_h +
  \norm{(I-\mathcal P_h^k)\bm\sigma}.
\end{equation}
\end{lem}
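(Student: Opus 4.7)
The plan is to split each of the two elementwise summands by inserting the elementwise average $\bar{e_h^u} := |T|^{-1}\int_T e_h^u$ and the face-mean $m_T(e_h^u)$ on each simplex $T$, and then to dispose of the resulting gradient norms via Lemma~\ref{lem:key}. Throughout, I write $c_T := m_T(e_h^\lambda)$ for brevity.

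For the volume part I would combine the standard Poincaré inequality $\|e_h^u - \bar{e_h^u}\|_T \lesssim h_T|e_h^u|_{1,T}$ with the decomposition
\[
  |\bar{e_h^u} - c_T| \leqslant |\bar{e_h^u} - m_T(e_h^u)| + |m_T(e_h^u) - m_T(e_h^\lambda)|.
\]
A Bramble--Hilbert argument on the reference element (the functional $v\mapsto\bar v - m_T(v)$ is bounded on $H^1(\hat T)$ and vanishes on constants), followed by affine scaling, yields $|\bar{e_h^u} - m_T(e_h^u)| \lesssim h_T^{1-d/2}|e_h^u|_{1,T}$. Since $P_0\subset P_k(F)$, the projection $\mathcal P_T^\partial$ preserves face averages, so $m_T(e_h^u)=m_T(\mathcal P_T^\partial e_h^u)$, and Cauchy--Schwarz together with $|F|\sim h_T^{d-1}$ gives $|m_T(\mathcal P_T^\partial e_h^u - e_h^\lambda)|\lesssim h_T^{-(d-1)/2}\|\mathcal P_T^\partial e_h^u - e_h^\lambda\|_{\partial T}$. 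Multiplying through by $|T|^{1/2}\sim h_T^{d/2}$ and collecting, I obtain
\[
  h_T^{-2}\|e_h^u - c_T\|_T^2 \lesssim |e_h^u|_{1,T}^2 + h_T^{-1}\|\mathcal P_T^\partial e_h^u - e_h^\lambda\|_{\partial T}^2.
\]

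For the boundary part I would decompose
\[
  \|e_h^\lambda - c_T\|_{\partial T}
  \leqslant
  \|e_h^\lambda - \mathcal P_T^\partial e_h^u\|_{\partial T}
  + \|\mathcal P_T^\partial e_h^u - m_T(e_h^u)\|_{\partial T}
  + |\partial T|^{1/2}|m_T(e_h^u) - m_T(e_h^\lambda)|.
\]
After scaling by $h_T^{-1/2}$, the first and third summands are already controlled by $\vertiii{\cdot}_h$ (the third one by reusing the Cauchy--Schwarz bound above with $|\partial T|\sim h_T^{d-1}$). For the middle summand I use that $\mathcal P_T^\partial$ fixes the constant $m_T(e_h^u)$, hence $\|\mathcal P_T^\partial e_h^u - m_T(e_h^u)\|_{\partial T}\leqslant \|e_h^u - m_T(e_h^u)\|_{\partial T}$, and then the scaled trace inequality $\|w\|_{\partial T}\lesssim h_T^{-1/2}\|w\|_T + h_T^{1/2}|w|_{1,T}$ together with $\|e_h^u - m_T(e_h^u)\|_T\lesssim h_T|e_h^u|_{1,T}$ (triangle inequality plus the above Bramble--Hilbert estimate) delivers $h_T^{-1/2}\|e_h^u - m_T(e_h^u)\|_{\partial T}\lesssim |e_h^u|_{1,T}$.

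Squaring and summing the two elementwise bounds over $T\in\mathcal T_h$ leaves an excess $|e_h^u|_{1,h}$-contribution, which Lemma~\ref{lem:key} absorbs into $\vertiii{(e_h^u,e_h^\lambda,\bm e_h^{\bm\sigma})}_h + \|(I-\mathcal P_h^k)\bm\sigma\|$; the remaining $\sum_T h_T^{-1}\|\mathcal P_T^\partial e_h^u - e_h^\lambda\|_{\partial T}^2$ piece is already part of $\vertiii{\cdot}_h^2$. The principal technical nuisance I anticipate is the Bramble--Hilbert estimate $|\bar v - m_T(v)|\lesssim h_T^{1-d/2}|v|_{1,T}$: one must verify that the functional vanishes on $P_0$ and then carefully track the differing measure normalizations of the cell and face averages through the affine pullback so that the extra $|T|^{1/2}$ produces precisely the factor $h_T$.
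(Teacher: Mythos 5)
Your proposal is correct, and for the volume half of the estimate it is essentially the paper's argument: you split $e_h^u-m_T(e_h^\lambda)$ into $e_h^u-m_T(e_h^u)$ and $m_T(e_h^u)-m_T(e_h^\lambda)$, bound the first by $h_T\verti{e_h^u}_{1,T}$ (your insertion of the cell average and the Bramble--Hilbert step is just a careful proof of the ``standard scaling argument'' the paper invokes), bound the second by $h_T^{1/2}\norm{\mathcal P_T^\partial e_h^u-e_h^\lambda}_{\partial T}$ via the same face-mean Cauchy--Schwarz computation, and finish with Lemma~\ref{lem:key}. For the boundary half, however, you take a genuinely different and more elementary route. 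The paper proves the elementwise bound \eqref{eq:999} by testing the local form \eqref{eq:777} of the error equation \eqref{eq:err-1} with a specially constructed flux --- the gradient of a $P_1$ function matching face averages when $k=0$, and a BDM-type lifting of $e_h^\lambda-m_T(e_h^\lambda)$ when $k\geqslant 1$ --- so that the boundary term is controlled by the volume term together with $\norm{\bm e_h^{\bm\sigma}}_T$ and $\norm{(I-\mathcal P_T^k)\bm\sigma}_T$. You instead telescope $e_h^\lambda-m_T(e_h^\lambda)$ through $\mathcal P_T^\partial e_h^u$ and $m_T(e_h^u)$: the first and third increments are absorbed directly into the stabilization part of $\vertiii{\cdot}_h$, and the middle one is handled by the facts that $\mathcal P_T^\partial$ fixes constants and is an $L^2(\partial T)$-contraction, followed by a scaled trace inequality and Lemma~\ref{lem:key}. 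All of these steps are sound, and the argument avoids the error equation, the BDM lifting, and the case split on $k$ altogether. What the paper's route buys is a purely local inequality \eqref{eq:999} expressed in terms of the volume error and the flux errors on the single element $T$; your route yields only an elementwise bound involving $\verti{e_h^u}_{1,T}$, which becomes useful after summing over $T$ and invoking Lemma~\ref{lem:key} a second time --- but that is exactly what the global statement of the lemma requires, so nothing is lost here.
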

\begin{proof}
  By the definition~\eqref{eq:def-m_T} of $m_T(\cdot) $, a standard scaling
  argument yields
  \[
    \norm{e_h^u-m_T(e_h^u)}_T \lesssim h_T\verti{e_h^u}_{1,T},
  \]
  and a straightforward computation gives
  \begin{align*}
    \norm{m_T(e_h^u)-m_T(e_h^\lambda)}_T
    &= \frac1{d+1} \norm{
      \sum_{F\in\mathcal F_T}
      \frac1{|F|} \int_F(\mathcal P_T^\partial e_h^u-e_h^\lambda)
    }_T \\
    &\leqslant \frac1{d+1} \sum_{F \in \mathcal F_T}
    \norm{
      \frac1{|F|} \int_F(\mathcal P_T^\partial e_h^u-e_h^\lambda)
    }_T \\
    &\lesssim h_T^\frac12 \sum_{F \in \mathcal F_T}
    \norm{\mathcal P_T^\partial e_h^u - e_h^\lambda}_F \\
    &\lesssim h_T^\frac12 \norm{\mathcal P_T^\partial e_h^u-e_h^\lambda}_{\partial T}.
  \end{align*}
  Consequently, by the definition \eqref{eq:def_vertiii_h} of
  $ \vertiii{\cdot}_h $, we obtain
  \begin{align*}
    \sum_{T\in\mathcal T_h} h_T^{-2} \norm{e_h^u-m_T(e_h^{\lambda})}_T^2
    &\lesssim\sum_{T\in\mathcal T_h}h_T^{-2}\left(\norm{e_h^u-m_T(e_h^u)}^2_T +
    \norm{m_T(e_h^u)-m_T(e_h^\lambda)}^2_T\right) \\
    &\lesssim\sum_{T\in\mathcal T_h}\left(
      \verti{e_h^u}_{1,T}^2 +
      h_T^{-1}\norm{\mathcal P_T^\partial e_h^u-e_h^{\lambda}}^2_{\partial T}
    \right) \\
    &\lesssim\vertiii{(e_h^u,e_h^{\lambda},\bm e_h^{\bm\sigma})}^2_h +
    \sum_{T \in \mathcal T_h} \verti{e_h^u}_{1,T}^2,
  \end{align*}
  which, together with Lemma~\ref{lem:key}, indicates
    \[
    \left(
      \sum_{T\in\mathcal T_h} h_T^{-2}\norm{e_h^u-m_T(e_h^\lambda)}^2_T
    \right)^\frac12
    \lesssim \vertiii{(e_h^u,e_h^\lambda,\bm e_h^{\bm\sigma})}_h +
    \norm{(I-\mathcal P_h^k)\bm\sigma}.
  \]

  To complete the proof, the thing left is to show, for any $ T \in
  \mathcal T_h $,
  \begin{equation}\label{eq:999}
    h_T^{-1}\norm{e_h^{\lambda}-m_T(e_h^{\lambda})}_{\partial T}^2\lesssim
    h_T^{-2}\norm{e_h^u-m_T(e_h^\lambda)}_T^2+\norm{\bm e_h^{\bm\sigma}}^2_T
    +\norm{(I-\mathcal P_T^k)\bm\sigma}_T^2.
  \end{equation}
 In fact, by~\eqref{eq:err-1} and integration by
  parts, we get, for any $T\in\mathcal T_h $ and $ \bm\tau\in\bm W(T) $,
  \begin{equation}\label{eq:777}
    \bint{e_h^{\lambda}-m_T(e_h^{\lambda}),\bm\tau\cdot\bm n}{\partial T} =
    (\bm c\bm e_h^{\bm\sigma},\bm\tau)_T+(e_h^u-m_T(e_h^{\lambda}),\ddiv\bm\tau)_T
    -\big(\bm c (I-\mathcal P_T^k)\bm\sigma,\bm\tau\big)_T.
  \end{equation}
  Let us first show that \eqref{eq:999} holds in the case of $k=0$. Evidently,
  there exists a unique $v\in P_1(T) $ such that
  \begin{equation}
    \label{eq:v}
    \int_Fv = \int_F\big(e_h^\lambda-m_T(e_h^\lambda)\big)
    \quad\text{for all $ F \in \mathcal F_T $.}
  \end{equation}
  Taking $ \bm\tau:=\bm\nabla v$ in \eqref{eq:777}, and noting the fact that
  $ \ddiv\bm\tau = 0$, we easily get 
  \[
    \bint{e_h^\lambda-m_T(e_h^\lambda),\bm\nabla v\cdot\bm n}{\partial T}
    \lesssim \norm{\bm\nabla v}_T
    \left(
      \norm{\bm e_h^{\bm\sigma}}_T
      +\norm{(I-\mathcal P_T^k)\bm\sigma}_T
    \right).
  \]
  From~\eqref{eq:v} and integration by parts it follows
  \[
    \dual{
      e_h^\lambda-m_T(e_h^\lambda),\bm\nabla v\cdot\bm n
    }_{\partial T} =
    \dual{
      v, \bm\nabla v \cdot \bm n
    }_{\partial T} =
    \norm{\bm\nabla v}^2_T.
  \]
  The above two estimates imply
  \[
    \norm{\bm\nabla v}_T \lesssim
    \norm{\bm e_h^{\bm\sigma}}_T +
    \norm{(I-\mathcal P_T^k)\bm\sigma}_T.
  \]
  Since~\eqref{eq:v} yields $ m_T(v) = 0 $,  a standard
  scaling argument gives
  \[
    h_T^{-1}\norm{v}_{\partial T}^2 =
    h_T^{-1}\norm{v-m_T(v)}_{\partial T}^2
    \lesssim \norm{\bm\nabla v}_T^2.
  \]
  We note that~\eqref{eq:v} also leads to
  \[
    h_T^{-1} \norm{e_h^\lambda - m_T(e_h^\lambda)}_{\partial T}^2
    \lesssim h_T^{-1} \norm{v}_{\partial T}^2.
  \]
 As a result,~\eqref{eq:999}  with $k=0$ follows from the above three estimates.

  Next we consider the case of $k\geqslant 1 $. By
  the well-known properties of the BDM elements~\cite{BDM;1985}, there exists a
  $ \bm\tau\in\bm W(T) $ such that
  \[
    \dual{
      e_h^\lambda-m_T(e_h^\lambda),
      \bm\tau\cdot\bm n
    }_{\partial T} =
    \norm{e_h^\lambda-m_T(e_h^\lambda)}^2_{\partial T} \text{ and }
    \norm{\bm\tau}_T \lesssim h_T^\frac12
    \norm{e_h^\lambda-m_T(e_h^\lambda)}_{\partial T}.
  \]
  Taking the above $ \bm\tau$ in~\eqref{eq:777}, and using   standard inverse
  estimates, we obtain
  \[
    \norm{e_h^\lambda-m_T(e_h^\lambda)}_{\partial T}\lesssim
    h_T^\frac12\norm{\bm e_h^{\bm\sigma}}_T +
    h_T^{-\frac12}\norm{e_h^u-m_T(e_h^\lambda)}_T +
    h_T^\frac12\norm{(I-\mathcal P_T^k)\bm\sigma}_T.
  \]
  This implies~\eqref{eq:999}, and thus completes the proof.
\end{proof}

Define $ \bm\eta_h\in\bm W_h $ by
\begin{equation}
  \label{eq:def_eta_h}
  (\bm\eta_h,\bm\tau)_T := -(e_h^u,\ddiv\bm\tau)_T +
  \dual{e_h^\lambda,\bm\tau\cdot\bm n}_{\partial T}\quad \text{for all $ \bm\tau\in\bm W(T) $ and $T\in\mathcal T_h $.}
\end{equation}

\begin{lem}
  \label{lem:lbjlxy-1}
  It holds
  \begin{equation}
    \label{eq:lsj-1}
    \big(\bm\nabla \Pi_h(e_h^u,e_h^\lambda) - \bm\eta_h, \bm q)_T = 0
  \end{equation}
  for all $ \bm q \in [P_k(T)]^d$ and $ T \in \mathcal T_h $. Moreover,
  \begin{equation}
    \norm{\bm\eta_h} \lesssim
    \vertiii{(e_h^u,e_h^\lambda,\bm e_h^{\bm\sigma})}_h +
    \norm{(I-\mathcal P_h^k)\bm\sigma}. \label{eq:lxy22} \end{equation}
\end{lem}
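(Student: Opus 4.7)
The plan is to dispatch the two claims in turn; both follow quickly from material already in the paper.

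For the orthogonality identity \eqref{eq:lsj-1}, I would integrate by parts on the left-hand side: for $\bm q \in [P_k(T)]^d$,
\[
  \big(\bm\nabla \Pi_h(e_h^u,e_h^\lambda), \bm q\big)_T
  = -\big(\Pi_h(e_h^u,e_h^\lambda), \ddiv \bm q\big)_T
    + \dual{\Pi_h(e_h^u,e_h^\lambda), \bm q\cdot\bm n}_{\partial T}.
\]
Since $\ddiv \bm q \in P_{k-1}(T) \subset P_k(T)$ and $\bm q\cdot\bm n|_F \in P_k(F)$ for every face $F \in \mathcal F_T$, the two identities of Lemma~\ref{lem:basic-Pi_h} replace $\Pi_h(e_h^u,e_h^\lambda)$ by $e_h^u$ in the volume term and by $e_h^\lambda$ in the boundary term. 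The result is exactly the right-hand side of the definition \eqref{eq:def_eta_h} of $\bm\eta_h$, i.e.\ $(\bm\eta_h, \bm q)_T$, which proves the orthogonality claim.

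For the bound \eqref{eq:lxy22}, the idea is to test \eqref{eq:def_eta_h} with $\bm\tau = \bm\eta_h|_T$, but to first exploit a cancellation by the constant $m_T(e_h^\lambda)$. Because this quantity is constant on $T$, the divergence theorem yields
\[
  -\big(m_T(e_h^\lambda), \ddiv \bm\eta_h\big)_T
  + \dual{m_T(e_h^\lambda), \bm\eta_h\cdot\bm n}_{\partial T} = 0,
\]
so
\[
  \norm{\bm\eta_h}_T^2
  = -\big(e_h^u - m_T(e_h^\lambda), \ddiv \bm\eta_h\big)_T
    + \dual{e_h^\lambda - m_T(e_h^\lambda), \bm\eta_h\cdot\bm n}_{\partial T}.
\]
I then apply Cauchy--Schwarz together with the standard inverse estimates $\norm{\ddiv \bm\eta_h}_T \lesssim h_T^{-1} \norm{\bm\eta_h}_T$ and $\norm{\bm\eta_h}_{\partial T} \lesssim h_T^{-1/2} \norm{\bm\eta_h}_T$, divide through by $\norm{\bm\eta_h}_T$, square, and sum over $T$ to obtain
\[
  \norm{\bm\eta_h}^2
  \lesssim \sum_{T\in\mathcal T_h}\left(
    h_T^{-2} \norm{e_h^u - m_T(e_h^\lambda)}_T^2
    + h_T^{-1} \norm{e_h^\lambda - m_T(e_h^\lambda)}_{\partial T}^2
  \right).
\]
Lemma~\ref{lem:basic} then converts the right-hand side into $\vertiii{(e_h^u,e_h^\lambda,\bm e_h^{\bm\sigma})}_h + \norm{(I-\mathcal P_h^k)\bm\sigma}$, which is the desired estimate.

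The only real subtlety is the decision to subtract the constant $m_T(e_h^\lambda)$ before applying Cauchy--Schwarz; without this substitution, testing with $\bm\eta_h$ only yields control by $\norm{e_h^u}_T$ and $\norm{e_h^\lambda}_{\partial T}$, which are not small. Recognizing that a common constant on $T$ drops out of the right-hand side of \eqref{eq:def_eta_h} is what makes Lemma~\ref{lem:basic} applicable. Everything else is routine scaling and inverse inequalities on shape-regular simplices.
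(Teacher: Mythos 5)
Your proof is correct and follows essentially the same route as the paper: the orthogonality identity comes from integration by parts plus the moment-matching properties of $\Pi_h$ in Lemma~\ref{lem:basic-Pi_h}, and the bound on $\norm{\bm\eta_h}$ is obtained by testing \eqref{eq:def_eta_h} with $\bm\eta_h|_T$, subtracting the constant $m_T(e_h^\lambda)$ (which drops out by the divergence theorem), applying inverse estimates, and invoking Lemma~\ref{lem:basic}. Your explicit remark on why the subtraction of $m_T(e_h^\lambda)$ is needed is exactly the cancellation the paper uses implicitly.
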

\begin{proof}
   The relation~\eqref{eq:lsj-1} follows from \eqref{eq:def_eta_h}. In what follows we show~\eqref{eq:lxy22}. Taking $
  \bm\tau := \bm\eta_h|_T$ in~\eqref{eq:def_eta_h}, and using integration by
  parts and   inverse estimates, we obtain
  \begin{align*}
    (\bm\eta_h,\bm\eta_h)_T
    &= -(e_h^u-m_T(e_h^\lambda),\ddiv\bm\eta_h)_T +
    \bint{e_h^\lambda-m_T(e_h^\lambda),\bm\eta_h\cdot\bm n}{\partial T}\\
    &\lesssim
    h_T^{-1}\norm{e_h^u-m_T(e_h^\lambda)}_T \norm{\bm\eta_h}_T +
    \norm{e_h^\lambda-m_T(e_h^\lambda)}_{\partial T}
    \norm{\bm\eta_h}_{\partial T},
  \end{align*}
 which, together with the fact that $ \norm{\bm\eta_h}_{\partial T}
  \lesssim h_T^{-\frac 12} \norm{\bm\eta_h}_T $, implies
  \begin{align*}
    (\bm\eta_h,\bm\eta_h)_T
    &\lesssim\left(h_T^{-1}\norm{e_h^u-m_T(e_h^\lambda)}_T +
    h_T^{-\frac12}\norm{e_h^\lambda-m_T(e_h^\lambda)}_{\partial
    T}\right)\norm{\bm\eta_h}_T.
  \end{align*}
 Then it follows 
  \[
    \norm{\bm\eta_h}_T\lesssim h_T^{-1}\norm{e_h^u-m_T(e_h^\lambda)}_T +
    h_T^{-\frac12}\norm{e_h^\lambda-m_T(e_h^\lambda)}_{\partial T},
  \]
which, together with Lemma~\ref{lem:basic}, yields the estimate~\eqref{eq:lxy22}. This completes the proof.
\end{proof}

\begin{lem}
  \label{lem:lbjlxy}
  It holds
  \begin{align}
    \verti{\Pi_h(e_h^u,e_h^{\lambda})}_{1,\Omega} &
    \lesssim \vertiii{
      (e_h^u,e_h^{\lambda}, \bm e_h^{\bm\sigma})
    }_h +
    \norm{(I-\mathcal P_h^k)\bm\sigma}, \label{eq:lxy3} \\
    \left(
      \sum_{T\in\mathcal T_h} h_T^{-2}
      \norm{e_h^u-\Pi_h(e_h^u,e_h^{\lambda})}^2_T
    \right)^\frac12 &
    \lesssim \vertiii{
      (e_h^u,e_h^{\lambda}, \bm e_h^{\bm\sigma})
    }_h +
    \norm{(I-\mathcal P_h^k)\bm\sigma}. \label{eq:lxy4}
  \end{align}
\end{lem}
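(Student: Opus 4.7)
The plan is to deduce both bounds directly from the Galerkin-type orthogonality~\eqref{eq:lsj-1} together with the $L^2$-bound~\eqref{eq:lxy22} on $\bm\eta_h$ from Lemma~\ref{lem:lbjlxy-1}, with a piecewise Poincaré inequality stitching the second estimate to the first.

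For~\eqref{eq:lxy3}, I would first note that $\Pi_h(e_h^u,e_h^\lambda)|_T \in P_{k+1}(T)$: indeed $\Pi_h^0 e_h^\lambda|_T \in P_1(T)$, while by construction $\Pi_h^1(\cdot,\cdot)|_T \in \Gamma(T) + (\prod_j \lambda_j)P_k(T) \subset P_{k+1}(T)$. Consequently $\bm\nabla \Pi_h(e_h^u,e_h^\lambda)|_T \in [P_k(T)]^d$, so I may take $\bm q := \bm\nabla \Pi_h(e_h^u,e_h^\lambda)|_T$ in~\eqref{eq:lsj-1}. This gives
\[
  \verti{\Pi_h(e_h^u,e_h^\lambda)}_{1,T}^{2} = \big(\bm\eta_h,\bm\nabla \Pi_h(e_h^u,e_h^\lambda)\big)_T \leqslant \norm{\bm\eta_h}_T \, \verti{\Pi_h(e_h^u,e_h^\lambda)}_{1,T},
\]
whence $\verti{\Pi_h(e_h^u,e_h^\lambda)}_{1,T} \leqslant \norm{\bm\eta_h}_T$. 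Squaring, summing over $T\in\mathcal T_h$, and applying~\eqref{eq:lxy22} delivers~\eqref{eq:lxy3}.

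For~\eqref{eq:lxy4}, set $w := e_h^u - \Pi_h(e_h^u,e_h^\lambda)$, so that $w|_T \in P_{k+1}(T)$ on each $T\in\mathcal T_h$. The key observation is that Lemma~\ref{lem:basic-Pi_h} with $q\equiv 1\in P_k(T)$ yields $\int_T w = 0$. A standard Poincaré inequality on the shape-regular element $T$ then gives $h_T^{-1}\norm{w}_T \lesssim \verti{w}_{1,T}$, and the triangle inequality provides $\verti{w}_{1,T} \leqslant \verti{e_h^u}_{1,T} + \verti{\Pi_h(e_h^u,e_h^\lambda)}_{1,T}$. Squaring, summing, and invoking Lemma~\ref{lem:key} on the first piece and~\eqref{eq:lxy3} on the second will yield~\eqref{eq:lxy4}.

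The main obstacle—really the only conceptual step—is recognizing that $\bm\nabla \Pi_h(e_h^u,e_h^\lambda)$ itself lies in $[P_k(T)]^d$ and is therefore an admissible test function in~\eqref{eq:lsj-1}; once this is seen, both bounds collapse to Cauchy--Schwarz, Poincaré, and the estimates already prepared in Lemmas~\ref{lem:key}, \ref{lem:basic}, and~\ref{lem:lbjlxy-1}.
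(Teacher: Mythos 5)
Your argument for \eqref{eq:lxy4} is fine and is essentially the paper's: the zero-mean property from Lemma~\ref{lem:basic-Pi_h}, a Poincar\'e inequality on $T$, the triangle inequality, and then Lemma~\ref{lem:key} together with \eqref{eq:lxy3}. The problem is \eqref{eq:lxy3}, and it sits exactly at the step you describe as the only conceptual one. The claim $\Pi_h^1(\cdot,\cdot)|_T\in\Gamma(T)+(\prod_j\lambda_j)P_k(T)\subset P_{k+1}(T)$ is false: by \eqref{eq:def-Gamma-T} each $S_i(T)$ is a product of $d$ barycentric coordinates times a polynomial of degree $k$, so $\Gamma(T)\subset P_{k+d}(T)$, and $(\prod_j\lambda_j)P_k(T)\subset P_{k+d+1}(T)$. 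Already for $d=2$, $k=0$ the range of $\Pi_T^1$ contains the cubic bubble $\lambda_1\lambda_2\lambda_3\notin P_1(T)$. Hence $\bm\nabla\Pi_h(e_h^u,e_h^\lambda)|_T\notin[P_k(T)]^d$ and it is not an admissible test function in \eqref{eq:lsj-1}. What \eqref{eq:lsj-1} actually tells you is $\mathcal P_T^k\bm\nabla\Pi_h(e_h^u,e_h^\lambda)=\bm\eta_h|_T$, i.e.\ you control only the $L^2$-projection of the gradient onto $[P_k(T)]^d$, and the inequality $\verti{\Pi_h(e_h^u,e_h^\lambda)}_{1,T}\leqslant\norm{\bm\eta_h}_T$ goes in the wrong direction relative to that projection. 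The loss is not recoverable by finite-dimensional norm equivalence either: for the interior bubble $w=(\prod_j\lambda_j)p$ with a suitable $p\in P_k(T)$ (any constant $p$ if $k=0$; a $p$ orthogonal to $P_{k-1}(T)$ in the weighted inner product $\int_T(\prod_j\lambda_j)pr$ if $k\geqslant1$) one has, integrating by parts and using $w|_{\partial T}=0$, that $\mathcal P_T^k\bm\nabla w=0$ while $\bm\nabla w\neq0$. So the map $w\mapsto\mathcal P_T^k\bm\nabla w$ kills genuine gradient content of exactly the bubble functions that $\Pi_h^1$ produces.

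For comparison, the paper proves \eqref{eq:lxy3} without \eqref{eq:lsj-1} or \eqref{eq:lxy22} at all: it splits $\Pi_h(e_h^u,e_h^\lambda)=\Pi_h^0e_h^\lambda+\Pi_h^1(e_h^u-\Pi_h^0e_h^\lambda,\,e_h^\lambda-\Pi_h^0e_h^\lambda)$, uses inverse estimates and the stability bound of Lemma~\ref{lem:Pi_h1} to reduce the $H^1$-seminorm to the quantities $h_T^{-1}\norm{e_h^u-m_T(e_h^\lambda)}_T$, $h_T^{-1/2}\norm{e_h^\lambda-m_T(e_h^\lambda)}_{\partial T}$ and $h_T^{-1/2}\norm{\Pi_h^0e_h^\lambda-m_T(e_h^\lambda)}_{\partial T}$, bounds the first two by Lemma~\ref{lem:basic}, and controls the last by comparing the averages $m_{T_1}(e_h^\lambda)$ and $m_{T_2}(e_h^\lambda)$ over neighbouring elements sharing a face (this is where the nodal-averaging structure of $\Pi_h^0$ enters). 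Lemma~\ref{lem:lbjlxy-1} is reserved for the proof of Theorem~\ref{thm:conv_sigma}. To close your gap you would need an argument of this type, exploiting the explicit construction of $\Pi_h^0$ and $\Pi_h^1$ rather than the orthogonality \eqref{eq:lsj-1}.
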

\begin{proof}
  Let us first show~\eqref{eq:lxy3}. We have, for any $ T \in \mathcal T_h $,
  \begin{align*}
    {} &
    \verti{\Pi_h(e_h^u,e_h^\lambda)}_{1,T} \\
    ={} &
    \verti{
      \Pi_h^0e_h^\lambda+\Pi_h^1(e_h^u -
      \Pi_h^0e_h^\lambda,e_h^\lambda-\Pi_h^0e_h^\lambda)
    }_{1,T}
    \quad \text{(by~\eqref{eq:def-Pi_h})} \\
    \leqslant{} &
    \verti{\Pi_h^0e_h^\lambda}_{1,T} +
    \verti{
      \Pi_h^1(e_h^u-\Pi_h^0e_h^\lambda,
      e_h^\lambda-\Pi_h^0e_h^\lambda)
    }_{1,T} \\
    \leqslant{} &
    \verti{\Pi_h^0e_h^\lambda-m_T(e_h^\lambda)}_{1,T} +
    \verti{
      \Pi_h^1(e_h^u-\Pi_h^0e_h^\lambda,
      e_h^\lambda-\Pi_h^0e_h^\lambda)
    }_{1,T} \\
    \lesssim{} &
    h_T^{-1}\norm{\Pi_h^0e_h^\lambda-m_T(e_h^\lambda)}_T +
    h_T^{-1}\norm{\Pi_h^1(e_h^u-\Pi_h^0e_h^\lambda,e_h^\lambda-\Pi_h^0e_h^\lambda)}_T
    \quad \text{(by inverse estimates)} \\
    \lesssim{} &
    h_T^{-1}\norm{\Pi_h^0e_h^\lambda-m_T(e_h^\lambda)}_T +
    h_T^{-1}\norm{e_h^u-\Pi_h^0e_h^\lambda}_T +
    h_T^{-\frac12}\norm{e_h^\lambda-\Pi_h^0e_h^\lambda}_{\partial T}
    \quad \text{(by Lemma~\ref{lem:Pi_h1})} \\
    \lesssim{} &
    h_T^{-1}\norm{\Pi_h^0e_h^\lambda-m_T(e_h^\lambda)}_T +
    h_T^{-1}\norm{e_h^u-m_T(e_h^\lambda)}_T +
    h_T^{-\frac12}\norm{e_h^\lambda-\Pi_h^0e_h^\lambda}_{\partial T}.
  \end{align*}
  From the estimate
  \[
    \norm{\Pi_h^0 e_h^\lambda - m_T(e_h^\lambda)}_T \lesssim
    h_T^\frac12 \norm{\Pi_h^0 e_h^\lambda - m_T(e_h^\lambda)}_{\partial T},
  \]
  it follows 
  \begin{align*}
    {} &
    \verti{\Pi_h(e_h^u,e_h^\lambda)}_{1,T} \\
    \lesssim{} &
    h_T^{-\frac12}\norm{\Pi_h^0e_h^\lambda-m_T(e_h^\lambda)}_{\partial T} +
    h_T^{-1}\norm{e_h^u-m_T(e_h^\lambda)}_T +
    h_T^{-\frac12}\norm{e_h^\lambda-\Pi_h^0e_h^\lambda}_{\partial T} \\
    \lesssim{} &
    h_T^{-1}\norm{e_h^u-m_T(e_h^\lambda)}_T +
    h_T^{-\frac12}\norm{e_h^\lambda-m_T(e_h^\lambda)}_{\partial T} +
    h_T^{-\frac12}\norm{\Pi_h^0e_h^\lambda-m_T(e_h^\lambda)}_{\partial T},
  \end{align*}
  and then, by Lemma~\ref{lem:basic}, it holds
  \begin{align}
    {} &
    \verti{\Pi_h(e_h^u,e_h^\lambda)}_{1,\Omega} =
    \left(
      \sum_{T\in\mathcal T_h}\verti{\Pi_h(e_h^u,e_h^\lambda)}^2_{1,T}
    \right)^\frac12 \nonumber \\
    \lesssim{} &
    \left(
      \sum_{T\in\mathcal T_h} \left(
        h_T^{-2}\norm{e_h^u-m_T(e_h^\lambda)}^2_T +
        h_T^{-1}\norm{e_h^\lambda-m_T(e_h^\lambda)}^2_{\partial T} +
        h_T^{-1}\norm{\Pi_h^0e_h^\lambda-m_T(e_h^\lambda)}^2_{\partial T}
      \right)
  \right)^\frac12\nonumber \\
  \lesssim{} &
  \vertiii{(e_h^u,e_h^\lambda,\bm e_h^{\bm\sigma})}_h +
  \norm{(I-\mathcal P_h^k)\bm\sigma} +
  \left(
    \sum_{T\in\mathcal T_h} h_T^{-1} \norm{\Pi_h^0e_h^\lambda-m_T(e_h^\lambda)}^2_{\partial T}
  \right)^\frac12.
  \label{eq:3956}
\end{align}
To obtain~\eqref{eq:lxy3}, it remains to show
\[
  \left(
    \sum_{T\in\mathcal T_h}
    h_T^{-1} \norm{\Pi_h^0e_h^\lambda-m_T(e_h^\lambda)}^2_{\partial T}
  \right)^\frac12 \lesssim
  \vertiii{(e_h^u,e_h^\lambda,\bm e_h^{\bm\sigma})}_h +
  \norm{(I-\mathcal P_h^k)\bm\sigma}.
\]
By the definition of $ \Pi_h^0 $,  we have, for any $ T \in \mathcal T_h $ such
that $ \overline T \subset \Omega $,
\begin{align*}
  \norm{\Pi_h^0e_h^\lambda-m_T(e_h^\lambda)}_{\partial T}
  &\lesssim h_T^{\frac{d-1}{2}} \sum_{a\in\mathcal N(T)}
  |\Pi_h^0e_h^{\lambda}(a)-m_T(e_h^{\lambda})| \\
  &\lesssim h_T^{\frac{d-1}{2}} \sum_{a\in\mathcal N(T)}
  \sum_{
    \substack{
      T_1,T_2\in\omega_a \\
      |\partial T_1\cap\partial T_2|\neq 0
    }
  } |m_{T_1}(e_h^{\lambda})-m_{T_2}(e_h^{\lambda})| \\
  &\lesssim h_T^{\frac{d-1}{2}}\sum_{a\in\mathcal N(T)}
  \sum_{
    \substack{
      T_1,T_2\in\omega_a \\
      |\partial T_1\cap\partial T_2|\neq 0
    }
  }
  h_T^{-\frac{d-1}2} \norm{
    m_{T_1}(e_h^{\lambda})-m_{T_2}(e_h^{\lambda})
  }_{\partial T_1 \cap \partial T_2} \\
  &\lesssim \sum_{a\in\mathcal N(T)} \sum_{
    \substack{
      T_1,T_2\in\omega_a \\
      |\partial T_1\cap\partial T_2|\neq 0
    }
  }
  \left(
    \norm{e_h^\lambda-m_{T_1}(e_h^\lambda)}_{\partial T_1\cap\partial T_2} +
    \norm{e_h^\lambda-m_{T_2}(e_h^\lambda)}_{\partial T_1\cap\partial T_2}
  \right) \\
  &\lesssim \sum_{a\in\mathcal N(T)} \sum_{T'\in\omega_a}
  \norm{e_h^{\lambda}-m_{T'}(e_h^{\lambda})}_{\partial T'},
\end{align*}
where $\mathcal N(T)$ denotes the set of vertexes of $T$, and we recall the
definition of $\omega_a$ by
\[
  \omega_a :=
  \left\{
    T \in \mathcal T_h:
    \text{ $a$ is a vertex of $T$}
  \right\}.
\]
Similarly, for any $ T \in \mathcal T_h $ such that $ \partial T \cap
\partial\Omega \neq \emptyset $, we also have
\begin{align*}
  \norm{\Pi_h^0e_h^\lambda-m_T(e_h^\lambda)}_{\partial T}
  \lesssim \sum_{a\in\mathcal N(T)} \sum_{T'\in\omega_a}
  \norm{e_h^{\lambda}-m_{T'}(e_h^{\lambda})}_{\partial T'}.
\end{align*}
As a consequence, from Lemma~\ref{lem:basic} it follows
\begin{align*}
  {} &
  \left(
    \sum_{T\in\mathcal T_h}
    h_T^{-1} \norm{\Pi_h^0e_h^\lambda-m_T(e_h^\lambda)}_{\partial T}^2
  \right)^{\frac 12} \\
  \lesssim{} &
  \left(
    \sum_{T \in \mathcal T_h}
    \sum_{a\in\mathcal N(T)} \sum_{T'\in\omega_a} h_{T'}^{-1}
    \norm{e_h^{\lambda}-m_{T'}(e_h^{\lambda})}_{\partial T'}^2
  \right)^\frac12 \\
  \lesssim{} &
  \left(
    \sum_{T \in \mathcal T_h} h_T^{-1}
    \norm{e_h^{\lambda}-m_T(e_h^{\lambda})}_{\partial T}^2
  \right)^\frac12 \\
  \lesssim{} &
  \vertiii{(e_h^u,e_h^\lambda,\bm
  e_h^{\bm\sigma})}_h + \norm{(I-\mathcal P_h^k)\bm\sigma}.
\end{align*}
This completes the proof of~\eqref{eq:lxy3}.

Next, let us show~\eqref{eq:lxy4}. By Lemma~\ref{lem:basic-Pi_h} we have
\[
  \mathcal P_T^0 e_h^u = \mathcal P_T^0 \Pi_h(e_h^u,e_h^\lambda)
  \quad\text{for all $T\in\mathcal T_h $.}
\]
Using standard approximation properties of the $L^2$-orthogonal
projection, we obtain 
\begin{align*}
  \norm{e_h^u-\Pi_h(e_h^u,e_h^\lambda)}_T
  &= \norm{(I-\mathcal P_T^0)e_h^u - (I-\mathcal P_T^0)\Pi_h(e_h^u,e_h^\lambda)}_T \\
  &\leqslant \norm{(I-\mathcal P_T^0)e_h^u}_T + \norm{(I-\mathcal P_T^0)\Pi_h(e_h^u,e_h^\lambda)}_T \\
  &\lesssim h_T\verti{e_h^u}_{1,T}+h_T\verti{\Pi_h(e_h^u,e_h^\lambda)}_{1,T}
\end{align*}
for all $T\in\mathcal T_h $. Then, from Lemma~\ref{lem:key} and
\eqref{eq:lxy3} it follows
\begin{align*}
  \left(
    \sum_{T\in\mathcal T_h} h_T^{-2} \norm{e_h^u-\Pi_h(e_h^u,e_h^\lambda)}_T^2
  \right)^\frac12
  &\lesssim
  \left(
    \sum_{T\in\mathcal T_h} \left(
      \verti{e_h^u}_{1,T}^2 +
      \verti{\Pi_h(e_h^u,e_h^\lambda)}^2_{1,T}
    \right)
  \right)^\frac12 \\
  &\lesssim \vertiii{
    (e_h^u,e_h^\lambda,\bm e_h^{\bm\sigma})
  }_h + \norm{(I-\mathcal P_h^k)\bm\sigma}.
\end{align*}
This completes the proof.
\end{proof}

Finally, we are in a position to prove
Theorem~\ref{thm:conv_sigma}. \\
{\bf Proof of Theorem~\ref{thm:conv_sigma}.}
  By~\eqref{eq:err-1},~\eqref{eq:hdg-2} and~\eqref{eq:hdg-3},   straightforward
  algebraic calculations show
  \begin{align*}
    (\bm c\bm e_h^{\bm\sigma}, \bm e_h^{\bm\sigma}) +
    (e_h^u,\ddiv_h\bm e_h^{\bm\sigma}) -
    \bndint{e_h^\lambda, \bm e_h^{\bm\sigma}\cdot\bm n} &=
    \big(\bm c (I-\mathcal P_h^k)\bm\sigma, \bm e_h^{\bm\sigma}\big), \\
    -(e_h^u,\ddiv_h\bm e_h^{\bm\sigma}) +
    \sum_{T\in\mathcal T_h}
    \dual{\alpha_T(\mathcal P_T^\partial e_h^u-e_h^{\lambda}),e_h^u}_{\partial T}
    &=
    (f,e_h^u) + (e_h^u,\ddiv_h\mathcal P_h^k\bm\sigma) -
    \sum_{T\in\mathcal T_h}
    \dual{\alpha_T\mathcal P_T^\partial(\mathcal P_T^{k+1}u-u),e_h^u}_{\partial T}, \\
    \sum_{T\in\mathcal T_h}
    \dual{
      \bm e_h^{\bm\sigma}\cdot\bm n -
      \alpha_T(\mathcal P_T^\partial e_h^u-e_h^{\lambda}),
      e_h^{\lambda}
    }_{\partial T}
    &=
    -\sum_{T\in\mathcal T_h}
    \dual{
      \mathcal P_T^k\bm\sigma\cdot\bm n -
      \alpha_T\mathcal P_T^\partial(\mathcal P_T^{k+1}u-u),
      e_h^{\lambda}
    }_{\partial T}.
  \end{align*}
  Adding the above three equations, we easily get
  \begin{equation}
    \label{eq:7321}
    \vertiii{(e_h^u,e_h^{\lambda},\bm e_h^{\bm\sigma})}_h^2 =
    \mathbb I_1 + \mathbb I_2 + \mathbb I_3,
  \end{equation}
  where
  \begin{align*}
    \mathbb I_1 &:= \big(
      \bm c (I-\mathcal P_h^k)\bm\sigma, \bm e_h^{\bm\sigma}
    \big),\\
    \mathbb I_2 &:= -\sum_{T\in\mathcal T_h}
    \dual{\alpha_T(\mathcal P_T^{k+1}u-u),\mathcal P_T^\partial e_h^u-e_h^{\lambda}}_{\partial T},\\
    \mathbb I_3 &:= (f,e_h^u) + (e_h^u,\ddiv_h\mathcal P_h^k\bm\sigma) -
    \sum_{T\in\mathcal T_h}
    \dual{\mathcal P_T^k\bm\sigma\cdot\bm n,e_h^{\lambda}}_{\partial T}.
  \end{align*}

  In light of the definition~\eqref{eq:def_vertiii_h} of $ \vertiii{\cdot}_h $ and the
  fact that $ \alpha_T = h_T^{-1}$, we have
  \begin{equation}\label{eq:2000}
    \mathbb I_1+\mathbb I_2 \lesssim
    \left(
      \norm{(I-\mathcal P_h^k)\bm\sigma} +
      \left(
        \sum_{T\in\mathcal T_h}
        h_T^{-1} \norm{(I-\mathcal P_T^{k+1})u}^2_{\partial T}
      \right)^\frac12
    \right)
    \vertiii{(e_h^u,e_h^\lambda,\bm e_h^{\bm\sigma})}_h.
  \end{equation}
  By the definition \eqref{eq:def_eta_h} of $ \bm\eta_h $, we obtain
  \[
    \mathbb I_3 = (f, e_h^u) - (\bm\eta_h,\bm\sigma)
    = \big(f, e_h^u-\Pi_h(e_h^u,e_h^\lambda)\big) +
    \big(f,\Pi_h(e_h^u,e_h^\lambda)\big) -
    (\bm\eta_h,\bm\sigma).
  \]
  Since $-\ddiv\bm\sigma=f\in L^2(\Omega) $ and $ \Pi_h(e_h^u,e_h^\lambda)\in
  H_0^1(\Omega) $, using integration by parts, we get
  \[
    \big(f,\Pi_h(e_h^u,e_h^\lambda)\big) =
    \big(-\ddiv\bm\sigma,\Pi_h(e_h^u,e_h^\lambda)\big) =
    \big(\bm\sigma,\bm\nabla\Pi_h(e_h^u,e_h^\lambda)\big).
  \]
  The above two equations indicate
  \begin{align*}
    \mathbb I_3
    &=
    \big(f,e_h^u-\Pi_h(e_h^u,e_h^\lambda)\big) +
    \big(\bm\sigma,\bm\nabla\Pi_h(e_h^u,e_h^\lambda)-\bm\eta_h\big) \\
    &=
    \big((I-\mathcal P_h^k)f,e_h^u-\Pi_h(e_h^u,e_h^\lambda)\big) +
    \big((I-\mathcal P_h^k)\bm\sigma,\bm\nabla\Pi_h(e_h^u,e_h^\lambda)-\bm\eta_h\big)
    \quad \text{(by Lemmas~\ref{lem:basic-Pi_h} and \ref{lem:lbjlxy-1})} \\
    &\leqslant
    \norm{(I-\mathcal P_h^k) f} \norm{e_h^u-\Pi_h(e_h^u,e_h^\lambda)} +
    \norm{(I-\mathcal P_h^k)\bm\sigma}
    \norm{\bm\nabla\Pi_h(e_h^u,e_h^\lambda)-\bm\eta_h} \\
    &\leqslant
    \norm{(I-\mathcal P_h^k) f} \norm{e_h^u-\Pi_h(e_h^u,e_h^\lambda)} +
    \norm{(I-\mathcal P_h^k)\bm\sigma} \big(
      \verti{\Pi_h(e_h^u,e_h^\lambda)}_{1,\Omega} + \norm{\bm\eta_h}
    \big).
  \end{align*}
  This, together with Lemmas~\ref{lem:lbjlxy-1} and~\ref{lem:lbjlxy},  implies
  \begin{equation}
    \mathbb I_3 \lesssim
    \left(
      h\norm{(I-\mathcal P_h^k)f} +
      \norm{(I-\mathcal P_h^k)\bm\sigma}
    \right)
    \left(
      \vertiii{(e_h^u,e_h^\lambda,\bm e_h^{\bm\sigma})}_h +
      \norm{(I-\mathcal P_h^k)\bm\sigma}
    \right).\label{eq:2001}
  \end{equation}
  Finally, using \eqref{eq:7321}-\eqref{eq:2001} and Young's inequality
  \[
    ab \leqslant \epsilon a^2 + \frac{1}{4\epsilon}b^2
    \quad\text{for all $ \epsilon>0$,}
  \]
  we easily obtain
  \[
    \vertiii{(e_h^u,e_h^\lambda,\bm e_h^{\bm\sigma})}_h^2
    \lesssim h^2 \norm{(I-\mathcal P_h^k)f}^2 +
    \norm{(I-\mathcal P_h^k)\bm\sigma}^2 +
    \sum_{T \in \mathcal T_h} h_T^{-1}
    \norm{(I-\mathcal P_T^{k+1})u}_{\partial T}^2.
  \]
  Consequently,~\eqref{eq:conv_sigma-1} follows directly from the following
  standard estimate:
  \[
    h_T^{-1} \norm{(I-\mathcal P_T^{k+1})u}_{\partial T}^2
    \lesssim \verti{(I-\mathcal P_T^{k+1})u}_{1,T}^2 \quad
    \text{for all $ T \in \mathcal T_h $.}
  \]
  Since \eqref{eq:conv_sigma-2} is a direct consequence of
  \eqref{eq:conv_sigma-1}, the proof of Theorem~\ref{thm:conv_sigma} is
  finished. \hfill\ensuremath{\blacksquare}

\subsection{Error estimation for numerical potential}

Similarly to \cite{projection_based_hdg}, we shall use Aubin-Nitsche's technique
of duality argument to derive the error estimation for the numerical potential
$u_h $. Let us introduce the following dual problem:
\begin{equation}
  \label{eq:dual}
  \left\{
  \begin{array}{rll}
    \bm c\bm\Phi - \bm\nabla\phi \! & \!\! = 0 & \text{in $ \Omega$,} \\
    \ddiv \bm\Phi \! & \!\! = -e_h^u & \text{in $ \Omega$,} \\
    \phi   \! & \!\! = 0 & \text{on $ \partial\Omega$},
  \end{array}
  \right.
\end{equation}
where, as defined in \eqref{eq:def-e_h}, $ e_h^u := u_h - \mathcal P_h^{k+1} u
$. We stress that, in the following analysis, we only use the following minimal
regularity condition of the dual problem~\eqref{eq:dual}:
\begin{equation}
  \label{eq:reg-cond-dual}
  \phi \in H_0^1(\Omega) \quad \text{and} \quad \bm\Phi \in H(\ddiv;\Omega).
\end{equation}

 The main result of this subsection is the following theorem:
\begin{thm}
  \label{thm:conv_u}
  It holds
  \begin{equation}
    \label{eq:conv_u}
    \begin{split}
      \norm{e_h^u} & \lesssim
      h\vertiii{(e_h^u,e_h^\lambda,\bm e_h^{\bm\sigma})}_h +
      h\norm{(I-\mathcal P_h^k)\bm\sigma} +
      \norm{(I-\mathcal P_h^0)\bm\Phi}^\frac12
      \Big(
        \vertiii{(e_h^u,e_h^\lambda,\bm e_h^{\bm\sigma})}_h +
        \norm{(I-\mathcal P_h^k)\bm\sigma}
      \Big)^\frac12 \\
      & \quad {} +
      \norm{(I-\mathcal P_h^{k+1})f}^\frac12 \norm{(I-\mathcal P_h^{k+1})\phi}^\frac12 +
      \left(
        \vertiii{(e_h^u,e_h^\lambda,\bm e_h^{\bm\sigma})}_h +
        \verti{(I-\mathcal P_h^{k+1})u}_{1,h}
      \right)^\frac12
      \verti{(I-\mathcal P_h^{k+1})\phi}_{1,h}^\frac12.
    \end{split}
  \end{equation}
\end{thm}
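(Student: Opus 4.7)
The argument is a classical Aubin--Nitsche duality adapted to the HDG framework under the minimal regularity~\eqref{eq:reg-cond-dual}. Starting from the identity
\[
  \norm{e_h^u}^2 = -\big(e_h^u,\ddiv\bm\Phi\big),
\]
my plan is to reproduce on the dual side the three-equation decomposition that led to~\eqref{eq:7321} in the proof of Theorem~\ref{thm:conv_sigma}. I would test the error equation~\eqref{eq:err-1} (and the analogous identities obtained from~\eqref{eq:hdg-2}--\eqref{eq:hdg-3} with the exact solution inserted) with the HDG projection of the dual triple, namely $(v_h,\mu_h,\bm\tau_h):=(\mathcal P_h^{k+1}\phi,\mathcal P_M\phi,\mathcal P_h^k\bm\Phi)$. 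Summing the three tested identities, the symmetric structure of the HDG form collapses the left-hand sides to $\norm{e_h^u}^2$, while the right-hand side is a consistency residual featuring the dual-side approximation errors $(I-\mathcal P_h^0)\bm\Phi$ and $(I-\mathcal P_h^{k+1})\phi$ in the volume and on faces.

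This residual splits into three families, each bounded separately. Volume pairings such as $\big(\bm c(I-\mathcal P_h^k)\bm\sigma,\bm\tau_h\big)$---which appeared already in the primal analysis---now meet a dual counterpart, and by Cauchy--Schwarz they produce the quantity $\norm{(I-\mathcal P_h^0)\bm\Phi}\cdot(\vertiii{(e_h^u,e_h^\lambda,\bm e_h^{\bm\sigma})}_h+\norm{(I-\mathcal P_h^k)\bm\sigma})$, whose square root is the third summand of~\eqref{eq:conv_u}. The data-level pairing $\big((I-\mathcal P_h^{k+1})f,(I-\mathcal P_h^{k+1})\phi\big)$ and the gradient-type pairing involving $\verti{(I-\mathcal P_h^{k+1})\phi}_{1,h}$ are handled likewise. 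The trace residuals $\sum_T\bint{\cdots}{\partial T}$ are the delicate piece: I would rewrite them by inserting the $H_0^1$-conforming lift $\Pi_h(e_h^u,e_h^\lambda)$ and performing a single global integration by parts, legitimate since $\phi\in H_0^1(\Omega)$ and $\bm\Phi\in H(\ddiv;\Omega)$, exactly as in the proof of Theorem~\ref{thm:conv_sigma}. The mismatch $e_h^u-\Pi_h(e_h^u,e_h^\lambda)$ is controlled by~\eqref{eq:lxy4} and the $H^1$-seminorm of the lift by~\eqref{eq:lxy3}. The two full-order contributions $h\vertiii{(e_h^u,e_h^\lambda,\bm e_h^{\bm\sigma})}_h$ and $h\norm{(I-\mathcal P_h^k)\bm\sigma}$ finally emerge after invoking the elliptic stability $\norm{\bm\Phi}+\verti{\phi}_{1,\Omega}\lesssim\norm{e_h^u}$ of the dual problem~\eqref{eq:dual}, which lets certain residuals be bounded by $(\cdots)\cdot h\norm{e_h^u}$ and then absorbed into the left-hand side via Young's inequality.

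The main obstacle is precisely the minimal dual regularity~\eqref{eq:reg-cond-dual}: without $\phi\in H^2(\Omega)$ or $\bm\Phi\in H^1(\Omega)$, elementwise integration by parts on the dual solution is not allowed and the projection errors $\norm{(I-\mathcal P_h^0)\bm\Phi}$, $\norm{(I-\mathcal P_h^{k+1})\phi}$, $\verti{(I-\mathcal P_h^{k+1})\phi}_{1,h}$ enjoy no $\mathcal O(h)$ decay. This is precisely why they must appear in~\eqref{eq:conv_u} under a square root rather than as clean full-strength multipliers. The remedy, as throughout this section, is to route every dual manipulation through $\Pi_h$ so that only a single global integration by parts is needed, and to choose the Cauchy--Schwarz splits so that each dual approximation error is paired with a primal quantity already controlled by Theorem~\ref{thm:conv_sigma}. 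The bookkeeping of which primal factor pairs with which dual factor, and which residuals must be absorbed via Young's inequality versus left as half-power products, is the delicate step that produces exactly the five contributions advertised in~\eqref{eq:conv_u}.
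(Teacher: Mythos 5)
Your high-level strategy---duality, routing every manipulation of the dual solution through the conforming lift $\Pi_h(e_h^u,e_h^\lambda)\in H_0^1(\Omega)$, a single global integration by parts, and the estimates \eqref{eq:lxy3}--\eqref{eq:lxy4}---is the right one and coincides with the paper's. But the concrete entry point you propose does not survive the minimal dual regularity \eqref{eq:reg-cond-dual}. Testing the three error equations with $(\mathcal P_h^{k+1}\phi,\mathcal P_M\phi,\mathcal P_h^k\bm\Phi)$ does not make the left-hand side collapse to $\norm{e_h^u}^2=-(e_h^u,\ddiv\bm\Phi)$: that collapse needs $\ddiv_h\mathcal P_h^k\bm\Phi=\mathcal P_h^{k+1}\ddiv\bm\Phi$ and single-valued normal traces of the projected flux, i.e.\ a commuting-diagram (RT/BDM-type) projection, which cannot even be applied to a field that is merely in $H(\ddiv;\Omega)$; the plain $L^2$ projection has neither property, and $(e_h^u,\ddiv_h\mathcal P_h^k\bm\Phi)$ is not controllable. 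This is exactly the obstruction you flag yourself, yet your plan still relies on the collapse. The paper never tests the dual triple into the scheme. It writes $\norm{e_h^u}^2=\big(e_h^u-\Pi_h(e_h^u,e_h^\lambda),e_h^u\big)-\big(\Pi_h(e_h^u,e_h^\lambda),\ddiv\bm\Phi\big)$, integrates by parts once globally, and then splits $\bm\Phi$ using the \emph{lowest-order} projection $\mathcal P_h^0$, not $\mathcal P_h^k$. That choice is essential: $\ddiv\mathcal P_T^0\bm\Phi=0$, so elementwise integration by parts on the polynomial side leaves only the face term $\sum_{T}\dual{e_h^\lambda,\mathcal P_T^0\bm\Phi\cdot\bm n}_{\partial T}$ (via Lemma~\ref{lem:basic-Pi_h}), and testing \eqref{eq:err-1} with $\mathcal P_h^0\bm\Phi$ converts it into $\big(\bm c(\bm\sigma_h-\bm\sigma),\mathcal P_h^0\bm\Phi\big)$ with no divergence term left over. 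This is also why $\norm{(I-\mathcal P_h^0)\bm\Phi}$, rather than $\norm{(I-\mathcal P_h^k)\bm\Phi}$, appears in \eqref{eq:conv_u}.

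Two further gaps. First, the remaining term $\big(\bm c(\bm\sigma-\bm\sigma_h),\bm\Phi\big)$ must be converted, using $\bm c\bm\Phi=\bm\nabla\phi$ together with \eqref{eq:hdg-2}--\eqref{eq:hdg-3} tested with $\mathcal P_h^{k+1}\phi$ and $\mathcal P_T^\partial\phi$, into
\begin{equation*}
  \big((I-\mathcal P_h^{k+1})f,(I-\mathcal P_h^{k+1})\phi\big)
  -\sum_{T\in\mathcal T_h}\dual{\alpha_T(\mathcal P_T^\partial u_h-\lambda_h),(I-\mathcal P_T^{k+1})\phi}_{\partial T};
\end{equation*}
this identity is the sole source of the fourth and fifth summands of \eqref{eq:conv_u}, and your plan does not identify the mechanism that produces it. Second, the two $O(h)$ contributions do not come from an elliptic stability bound $\norm{\bm\Phi}+\verti{\phi}_{1,\Omega}\lesssim\norm{e_h^u}$---the proof of Theorem~\ref{thm:conv_u} uses no regularity of the dual problem beyond \eqref{eq:reg-cond-dual} (regularity enters only in Corollary~\ref{coro2}). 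They come from the mismatch term $\big(e_h^u-\Pi_h(e_h^u,e_h^\lambda),e_h^u\big)$, which by \eqref{eq:lxy4} already carries the factor $h$ and is paired with $\norm{e_h^u}$ before the final application of Young's inequality.
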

\begin{proof}
  Since $ \bm\Phi \in H(\ddiv;\Omega) $ and $ \Pi_h(e_h^u,e_h^\lambda) \in
  H_0^1(\Omega) $, using integration by parts gives
  \[
    -\big(
      \Pi_h(e_h^u,e_h^\lambda),\ddiv\bm\Phi
    \big) = \big(
      \bm\nabla\Pi_h(e_h^u,e_h^\lambda), \bm\Phi
    \big).
  \]
  It follows  
  \begin{align*}
    {} &
    -\big(
      \Pi_h(e_h^u,e_h^\lambda),\ddiv\bm\Phi
    \big) \\
    ={} &
    \big(
      \bm\nabla\Pi_h(e_h^u,e_h^\lambda), (I-\mathcal P_h^0)\bm\Phi
    \big) +
    \big(
      \bm\nabla\Pi_h(e_h^u,e_h^\lambda),\mathcal P_h^0\bm\Phi
    \big) \\
    ={} &
    \big(
      \bm\nabla\Pi_h(e_h^u,e_h^\lambda),(I-\mathcal P_h^0)\bm\Phi
    \big) +
    \sum_{T \in \mathcal T_h} \dual{
        \Pi_h(e_h^u,e_h^\lambda), \mathcal P_T^0\bm\Phi\cdot\bm n
      }_{\partial T}
    \quad \text{(by integration by parts)} \\
    ={} &
    \big(
      \bm\nabla\Pi_h(e_h^u,e_h^\lambda),(I-\mathcal P_h^0)\bm\Phi
    \big) +
    \sum_{T \in \mathcal T_h} \dual{
      e_h^\lambda, \mathcal P_T^0\bm\Phi\cdot\bm n
    }_{\partial T}
    \quad \text{(by Lemma~\ref{lem:basic-Pi_h})}.
  \end{align*}
  Thus, we get
  \begin{align*}
    \norm{e_h^u}^2
    &=
    \big( e_h^u-\Pi_h(e_h^u,e_h^\lambda),e_h^u \big) +
    (\Pi_h(e_h^u,e_h^\lambda),e_h^u) \\
    &=
    \big(e_h^u - \Pi_h(e_h^u,e_h^\lambda), e_h^u\big) -
    \big(\Pi_h(e_h^u,e_h^\lambda),\ddiv\bm\Phi\big)
    \quad \text{(by~\eqref{eq:dual})} \\
    &=
    \mathbb I_1 + \mathbb I_2 + \mathbb I_3,
  \end{align*}
  where
  \begin{align*}
    \mathbb I_1 &:= \big(e_h^u-\Pi_h(e_h^u,e_h^\lambda),e_h^u\big), \\
    \mathbb I_2 &:= \big(
      \bm\nabla\Pi_h(e_h^u,e_h^\lambda), (I-\mathcal
      P_h^0)\bm\Phi
    \big), \\
    \mathbb I_3 &:= \sum_{T \in \mathcal T_h}
    \dual{e_h^\lambda, \mathcal P_T^0\bm\Phi\cdot\bm n}_{\partial T}.
  \end{align*}
  By~\eqref{eq:lxy4} it holds
  \[
    \mathbb I_1 \lesssim h
    \Big(
      \vertiii{(e_h^u,e_h^\lambda,\bm e_h^{\bm\sigma})}_h +
      \norm{(I-\mathcal P_h^k)\bm\sigma}
    \Big) \norm{e_h^u}.
  \]
  By~\eqref{eq:lxy3} it holds
  \[
    \mathbb I_2 \lesssim
    \Big(
      \vertiii{(e_h^u,e_h^\lambda,\bm e_h^{\bm\sigma})}_h +
      \norm{(I-\mathcal P_h^k)\bm\sigma}
    \Big) \norm{(I-\mathcal P_h^0)\bm\Phi}.
  \]

  Now let us estimate $ \mathbb I_3 $. In view of~\eqref{eq:model}, \eqref{eq:dual} and
  integration by parts, we obtain
  \begin{align*}
    {} &
    \big(\bm c(\bm\sigma - \bm\sigma_h),\bm\Phi\big)
    = (\bm\sigma - \bm\sigma_h, \bm c \bm\Phi)
    = (\bm\sigma - \bm\sigma_h, \bm\nabla\phi) \\
    ={} &
    (f,\phi) +
    \sum_{T \in \mathcal T_h} \Big(
      (\ddiv\bm\sigma_h,\phi)_T -
      \dual{\bm\sigma_h\cdot\bm n,\phi}_{\partial T}
    \Big) \\
    ={} &
    (f,\phi) +
    \sum_{T \in \mathcal T_h} \Big(
      (\ddiv\bm\sigma_h,\mathcal P_T^{k+1}\phi)_T -
      \dual{\bm\sigma_h\cdot\bm n,\mathcal P_T^\partial\phi}_{\partial T}
    \Big).
  \end{align*}
  From \eqref{eq:hdg-2} it follows
  \[
    (\ddiv_h\bm\sigma_h,\mathcal P_h^{k+1}\phi)
    = -(f,\mathcal P_h^{k+1}\phi) +
    \sum_{T \in \mathcal T_h}
    \dual{
      \alpha_T(\mathcal P_T^\partial u_h - \lambda_h),
      \mathcal P_T^{k+1}\phi
    }_{\partial T}.
  \]
  From \eqref{eq:hdg-3} it follows
  \[
    \sum_{T \in \mathcal T_h}
    \dual{\bm\sigma_h\cdot\bm n, \mathcal P_T^\partial\phi}_{\partial T}
    = \sum_{T \in \mathcal T_h} \dual{
      \alpha_T(\mathcal P_T^\partial u_h - \lambda_h),
      \mathcal P_T^\partial\phi
    }_{\partial T}
    = \sum_{T \in \mathcal T_h} \dual{
      \alpha_T(\mathcal P_T^\partial u_h - \lambda_h), \phi
    }_{\partial T}.
  \]
 The above three equations lead to
  \begin{align*}
    \big(\bm c(\bm\sigma - \bm\sigma_h), \bm\Phi\big)
    &= \big(f,(I-\mathcal P_h^{k+1})\phi\big) -
    \sum_{T \in \mathcal T_h}
    \dual{
      \alpha_T(\mathcal P_T^\partial u_h-\lambda_h),
     (I-\mathcal P_T^{k+1})\phi
    }_{\partial T} \\
    &= \big((I-\mathcal P_h^{k+1})f, (I-\mathcal P_h^{k+1})\phi \big) -
    \sum_{T \in \mathcal T_h}
    \dual{
      \alpha_T(\mathcal P_T^\partial u_h - \lambda_h),
      (I-\mathcal P_T^{k+1})\phi
    }_{\partial T}.
  \end{align*}
This equation, together with   \eqref{eq:err-1} and the fact that $ \ddiv \mathcal P_T^0\bm\Phi = 0$ for all $ T \in
  \mathcal T_h $,  gives
  \begin{align*}
    \mathbb I_3 & =
    \big(\bm c(\bm\sigma_h - \bm\sigma), \mathcal P_h^0\bm\Phi\big) \\
    &=
    \big(\bm c(\bm\sigma_h - \bm\sigma),\bm\Phi\big) -
    \big(\bm c(\bm\sigma_h - \bm\sigma), (I-\mathcal P_h^0)\bm\Phi\big) \\
    &=
    -\big((I-\mathcal P_h^{k+1})f, (I-\mathcal P_h^{k+1})\phi \big) +
    \sum_{T \in \mathcal T_h}
    \dual{
      \alpha_T(\mathcal P_T^\partial u_h - \lambda_h),
      (I-\mathcal P_T^{k+1})\phi
    }_{\partial T} -
    \big(\bm c(\bm\sigma_h - \bm\sigma), (I-\mathcal P_h^0)\bm\Phi\big).
  \end{align*}
  Using the Cauchy Schwarz inequality and the fact that $ \alpha_T = h_T^{-1}$,
  we obtain
  \begin{align}
    \mathbb I_3 \lesssim &
    \norm{(I-\mathcal P_h^{k+1})f} \norm{(I-\mathcal P_h^{k+1})\phi} +
    \norm{\bm\sigma - \bm\sigma_h} \norm{(I-\mathcal P_h^0)\bm\Phi} +
    \sum_{T \in \mathcal T_h}
      \norm{\alpha_T^\frac12(\mathcal P_T^\partial u_h - \lambda_h)}_{\partial T}
      \norm{\alpha_T^\frac12(I-\mathcal P_T^{k+1})\phi}_{\partial T} \nonumber \\
    \lesssim&
    \norm{(I-\mathcal P_h^{k+1})f} \norm{(I-\mathcal P_h^{k+1})\phi} +
    \norm{\bm\sigma - \bm\sigma_h} \norm{(I-\mathcal P_h^0)\bm\Phi} + {}
    \nonumber \\
    &
    \left(
      \sum_{T \in \mathcal T_h} h_T^{-1} \norm{
        (\mathcal P_T^\partial u_h - \lambda_h)
      }_{\partial T}^2
    \right)^\frac12
    \left(
      \sum_{T \in \mathcal T_h} h_T^{-1} \norm{
        (I-\mathcal P_T^{k+1})\phi
      }_{\partial T}^2
    \right)^\frac12. \label{eq:3452}
  \end{align}
  Using the definition~\eqref{eq:def-e_h} of $ e_h^u $ and $ e_h^\lambda $, we
  obtain
  \begin{align*}
    {} &
    \sum_{T \in \mathcal T_h} h_T^{-1} \norm{
      \mathcal P_T^\partial u_h - \lambda_h
    }_{\partial T}^2 \\
    ={} &
    \sum_{T \in \mathcal T_h} h_T^{-1} \norm{
      \mathcal P_T^\partial e_h^u - e_h^\lambda +
      \mathcal P_T^\partial(\mathcal P_T^{k+1}u -u)
    }_{\partial T}^2 \\
    \lesssim{} &
    \sum_{T \in \mathcal T_h} \Big(
      h_T^{-1}\norm{
        \mathcal P_T^\partial e_h^u - e_h^\lambda
      }_{\partial T}^2 + h_T^{-1} \norm{
        \mathcal P_T^\partial(\mathcal P_T^{k+1}u - u)
      }_{\partial T}^2
    \Big) \\
    \lesssim{} &
    \sum_{T \in \mathcal T_h} \Big(
      h_T^{-1} \norm{
        \mathcal P_T^\partial e_h^u - e_h^\lambda
      }_{\partial T}^2 + h_T^{-1} \norm{
        (I-\mathcal P_T^{k+1})u
      }_{\partial T}^2
    \Big).
  \end{align*}
  From the definition~\eqref{eq:def_vertiii_h} of $ \vertiii{\cdot}_h $, it
  follows that
  \[
    \sum_{T \in \mathcal T_h} h_T^{-1} \norm{
      \mathcal P_T^\partial u_h - \lambda_h
    }_{\partial T}^2
    \lesssim
    \vertiii{(e_h^u,e_h^\lambda,\bm e_h^{\bm\sigma})}_h^2 +
    \sum_{T \in \mathcal T_h} h_T^{-1}
    \norm{
      (I-\mathcal P_T^{k+1})u
    }_{\partial T}^2.
  \]
  Collecting~\eqref{eq:3452} and the above estimate, we obtain
  \begin{align*}
    \mathbb I_3 \lesssim
    & \norm{(I-\mathcal P_h^{k+1})f} \norm{(I-\mathcal P_h^{k+1})\phi} +
    \norm{\bm\sigma - \bm\sigma_h} \norm{(I-\mathcal P_h^0)\bm\Phi} + {} \\
    &
    \left(
      \vertiii{(e_h^u,e_h^\lambda,\bm e_h^{\bm\sigma})}_h +
      \left(
        \sum_{T \in \mathcal T_h} h_T^{-1}\norm{(I-\mathcal P_T^{k+1})u}_{\partial T}^2
      \right)^{\frac 12}
    \right)
    \left(
      \sum_{T \in \mathcal T_h} h_T^{-1}\norm{(I-\mathcal P_T^{k+1})\phi}_{\partial T}^2
    \right)^{\frac 12}.
  \end{align*}
  Using standard approximation properties of the $L^2$-orthogonal projection, we
  get, for all $ T \in \mathcal T_h $,
  \begin{align*}
    h_T^{-1}\norm{(I-\mathcal P_T^{k+1})u}_{\partial T}^2
    &\lesssim \verti{(I-\mathcal P_T^{k+1})u}_{1,T}^2,\\
    h_T^{-1}\norm{(I-\mathcal P_T^{k+1})\phi}_{\partial T}^2
    &\lesssim \verti{(I-\mathcal P_T^{k+1})\phi}_{1,T}^2.
  \end{align*}
  Thus, it follows 
  \begin{align*}
    \mathbb I_3 \lesssim
    & \norm{(I-\mathcal P_h^{k+1})f} \norm{(I-\mathcal P_h^{k+1})\phi} +
    \norm{\bm\sigma - \bm\sigma_h} \norm{(I-\mathcal P_h^0)\bm\Phi} + {} \\
    &
    \left(
      \vertiii{(e_h^u,e_h^\lambda,\bm e_h^{\bm\sigma})}_h +
      \verti{(I-\mathcal P_h^{k+1})u}_{1,h}
    \right)
    \verti{(I-\mathcal P_h^{k+1})\phi}_{1,h}.
  \end{align*}

  Finally, using these estimates for $ \mathbb I_1 $, $ \mathbb I_2 $, $ \mathbb
  I_3 $, Young's inequality, and the fact that
  \[
    \norm{\bm\sigma - \bm\sigma_h} \lesssim
    \vertiii{(e_h^u,e_h^\lambda,\bm e_h^{\bm\sigma})}_h +
    \norm{(I-\mathcal P_h^k)\bm\sigma},
  \]
  we easily obtain
  \[
    \begin{split}
      \norm{e_h^u}^2 & \lesssim
      h^2\vertiii{(e_h^u,e_h^\lambda,\bm e_h^{\bm\sigma})}_h^2 +
      h^2\norm{(I-\mathcal P_h^k)\bm\sigma}^2 +
      \norm{(I-\mathcal P_h^0)\bm\Phi}
      \Big(
        \vertiii{(e_h^u,e_h^\lambda,\bm e_h^{\bm\sigma})}_h +
        \norm{(I-\mathcal P_h^k)\bm\sigma}
      \Big) \\
      & \quad {} +
      \norm{(I-\mathcal P_h^{k+1})f} \norm{(I-\mathcal P_h^{k+1})\phi} +
      \left(
        \vertiii{(e_h^u,e_h^\lambda,\bm e_h^{\bm\sigma})}_h +
        \verti{(I-\mathcal P_h^{k+1})u}_{1,h}
      \right)
      \verti{(I-\mathcal P_h^{k+1})\phi}_{1,h},
    \end{split}
  \]
  which indicates~\eqref{eq:conv_u}, and thus completes the proof.
\end{proof}

By the above theorem, standard approximation properties of the $ L^2
$-orthogonal projection, Young's inequality, and Corollary~\ref{coro1}, we
immediately derive the following corollary.
\begin{coro}\label{coro2}
  Suppose that the dual problem~\eqref{eq:dual} satisfies the following
  regularity estimate:
  \[
    \norm{\phi}_{2,\Omega} + \norm{\bm\Phi}_{1,\Omega} \lesssim
    \norm{e_h^u}.
  \]
  Then it holds
  \begin{equation}
    \norm{u - u_h} \lesssim h \left(
      \vertiii{(e_h^u,e_h^\lambda,\bm e_h^{\bm\sigma})}_h +
      \norm{(I-\mathcal P_h^k)\bm\sigma} + h \norm{(I-\mathcal P_h^{k+1})f} +
      \verti{(I-\mathcal P_h^{k+1})u}_{1,h}
    \right) +
    \norm{(I-\mathcal P_h^{k+1})u}.
  \end{equation}
Furthermore, if $ f \in H^s(\Omega) $, $ \bm\sigma \in H^{s+1}(\Omega) $, and $ u
  \in H^{s+2}(\Omega) $ for a nonnegative integer $ s $, then 
  \begin{equation}
    \norm{u - u_h}   \lesssim h^{\min\{s+2,k+2\}}
    \left(
      \norm{f}_{s,\Omega} + \norm{\bm\sigma}_{s+1,\Omega} +
      \norm{u}_{s+2,\Omega}
    \right).
\end{equation}
\end{coro}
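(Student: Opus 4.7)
The plan is to derive the first estimate by feeding Theorem~\ref{thm:conv_u} with the dual regularity assumption and standard $L^2$-projection bounds, then absorbing the resulting $\norm{e_h^u}$ factors via Young's inequality, and finally closing with the triangle inequality; the explicit convergence rate then follows by inserting the bounds from Theorem~\ref{thm:conv_sigma} and standard approximation estimates, and comparing powers of $h$.

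For the first assertion I would begin with the standard $L^2$-projection estimates
\[
  \norm{(I - \mathcal P_h^0)\bm\Phi} \lesssim h\,\norm{\bm\Phi}_{1,\Omega},\quad
  \norm{(I - \mathcal P_h^{k+1})\phi} \lesssim h^2\,\norm{\phi}_{2,\Omega},\quad
  \verti{(I - \mathcal P_h^{k+1})\phi}_{1,h} \lesssim h\,\norm{\phi}_{2,\Omega},
\]
together with the assumed bound $\norm{\phi}_{2,\Omega} + \norm{\bm\Phi}_{1,\Omega} \lesssim \norm{e_h^u}$. Substituting these into the right-hand side of \eqref{eq:conv_u}, each of the three terms involving the dual solution takes the form $\norm{e_h^u}^{1/2}\,M^{1/2}$, where $M$ is an appropriate power of $h$ multiplied by one of the quantities listed in the asserted bound. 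A single application of Young's inequality $a^{1/2}b^{1/2} \leq \epsilon a + \frac{1}{4\epsilon}b$ per term, with a sufficiently small $\epsilon$, then absorbs the $\epsilon\norm{e_h^u}$ contributions into the left-hand side, producing
\[
  \norm{e_h^u} \lesssim h\,\vertiii{(e_h^u,e_h^\lambda,\bm e_h^{\bm\sigma})}_h + h\,\norm{(I-\mathcal P_h^k)\bm\sigma} + h^2\,\norm{(I-\mathcal P_h^{k+1})f} + h\,\verti{(I-\mathcal P_h^{k+1})u}_{1,h}.
\]
The first stated bound then follows from the triangle inequality $\norm{u-u_h} \leq \norm{e_h^u} + \norm{(I-\mathcal P_h^{k+1})u}$.

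For the explicit convergence rate I would combine the first bound with Theorem~\ref{thm:conv_sigma}, which controls $\vertiii{(e_h^u,e_h^\lambda,\bm e_h^{\bm\sigma})}_h$, and with the standard approximation estimates $\norm{(I-\mathcal P_h^k)\bm\sigma} \lesssim h^{\min(s+1,k+1)}\norm{\bm\sigma}_{s+1,\Omega}$, $\verti{(I-\mathcal P_h^{k+1})u}_{1,h} \lesssim h^{\min(s+1,k+1)}\norm{u}_{s+2,\Omega}$, $\norm{(I-\mathcal P_h^k)f} \lesssim h^{\min(s,k+1)}\norm{f}_{s,\Omega}$, $\norm{(I-\mathcal P_h^{k+1})f} \lesssim h^{\min(s,k+2)}\norm{f}_{s,\Omega}$, and $\norm{(I-\mathcal P_h^{k+1})u} \lesssim h^{\min(s+2,k+2)}\norm{u}_{s+2,\Omega}$. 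A term-by-term comparison of powers of $h$ shows that every contribution is of order $h^{\min(s+2,k+2)}$, giving the claimed rate.

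The main technical obstacle is only bookkeeping: one has to track carefully that the power of $h$ absorbed by each Young's inequality is exactly what is needed to produce the $h$, respectively $h^2$, prefactor asserted in the first bound, and verify in the second step that the term $h^2\,\norm{(I-\mathcal P_h^{k+1})f}$ (which is formally of order $h^{\min(s+2,k+4)}$) is not the one limiting the convergence rate. No new ideas are required beyond those already used in the proofs of Theorems~\ref{thm:conv_sigma} and~\ref{thm:conv_u}.
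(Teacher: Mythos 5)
Your proposal is correct and follows exactly the route the paper indicates (which it only sketches): substitute the dual regularity bound and standard $L^2$-projection estimates into Theorem~\ref{thm:conv_u}, absorb the resulting $\norm{e_h^u}$ factors by Young's inequality, conclude the first bound by the triangle inequality, and then obtain the rate from Theorem~\ref{thm:conv_sigma} (equivalently Corollary~\ref{coro1}) and the approximation estimates. The bookkeeping of the powers of $h$ in your last paragraph is accurate, so no gaps remain.
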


\section{Flux postprocessing}\label{sec_post}

In this section we follow the idea in \cite{super_con_dg, projection_based_hdg}
to construct a local postprocessing so as to obtain a new flux approximation $
\bm\sigma_h^* \in H(\ddiv;\Omega) $. We shall show that $ \bm\sigma_h^* $
converges at the same order as $ \bm\sigma_h $, while its divergence converges at
one higher order than $ \bm\sigma_h $.

Define
\begin{equation}
  \label{sigma*}
  \bm\sigma_h^* := \bm\sigma_h - \widetilde{\bm\sigma}_h,
\end{equation}
where, for any $T\in\mathcal T_h $,
\[
  \widetilde{\bm\sigma}_h|_T\in \text{RT}_{k+1}(T) :=
  \left\{
    \bm\tau: \bm\tau = \bm p + q \bm x,
    \ \bm p \in [P_{k+1}(T)]^d,\ q \in P_{k+1}(T)
  \right\}
\]
satisfies
\begin{subequations}
  \begin{align}
    (\widetilde{\bm\sigma}_h, \bm q)_T  =  0 & & &
    \text{for all $ \bm q \in [P_k(T)]^d $},\label{eq:def-post-1} \\
    \bint{\widetilde{\bm\sigma}_h\cdot\bm n, \mu}{F} =
    \bint{\alpha_T(\mathcal P_T^\partial u_h - \lambda_h), \mu}{F} & & &
    \text{for all $ \mu \in P_{k+1}(F) $ and face $F$ of $T$}.\label{eq:def-post-2}
  \end{align}
\end{subequations}
We note that the existence and uniqueness of $ \widetilde{\bm\sigma}_h $ follow
from the property of the RT elements \cite{RT}.

We have the following theorem.

\begin{thm}
  \label{thm_appro_post_sigma}
  It holds
  \begin{equation}
    \bm\sigma_h^*  \in H(\ddiv;\Omega) \quad \text{and} \quad
    \ddiv\bm\sigma_h^* =  \mathcal P_h^{k+1} \ddiv \bm\sigma.
  \end{equation}
  Moreover, it holds
  \begin{equation}\label{esti-1}
    \norm{\bm\sigma - \bm\sigma_h^*} \lesssim
    \vertiii{(e_h^u,e_h^\lambda,\bm e_h^{\bm\sigma})}_h +
    \norm{\bm\sigma - \bm\sigma_h} +
    \verti{(I-\mathcal P_h^{k+1})u}_{1,h},
  \end{equation}
  which implies
  \begin{equation} \label{esti-2}
    \norm{\bm\sigma - \bm\sigma_h^*} \lesssim h^{\min\{s+1,k+1\}}
    \left(
      \norm{f}_{s,\Omega} + \norm{\bm\sigma}_{s+1,\Omega} +
      \norm{u}_{s+2,\Omega}
    \right),
  \end{equation}
  if $ f \in H^s(\Omega) $, $ \bm\sigma \in H^{s+1}(\Omega) $, and $ u \in
  H^{s+2}(\Omega) $ for a nonnegative integer $ s $.
\end{thm}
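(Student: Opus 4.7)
The plan is to establish the three assertions in order: the structural properties of $\bm\sigma_h^*$ first, then the $L^2$-bound~\eqref{esti-1}, and finally the rate~\eqref{esti-2}. For the structural part, I would first observe that $\alpha_T(\mathcal P_T^\partial u_h - \lambda_h)$ already lies in $P_k(F)\subset P_{k+1}(F)$ on each face $F$, so since $\widetilde{\bm\sigma}_h\cdot\bm n|_F\in P_{k+1}(F)$, the defining condition~\eqref{eq:def-post-2} forces the pointwise identity $\widetilde{\bm\sigma}_h\cdot\bm n|_F = \alpha_T(\mathcal P_T^\partial u_h-\lambda_h)$. Consequently $\bm\sigma_h^*\cdot\bm n|_F = \bm\sigma_h\cdot\bm n - \alpha_T(\mathcal P_T^\partial u_h-\lambda_h)\in P_k(F)$ from both sides, and \eqref{eq:hdg-3}, localized by choosing $\mu_h\in M_h^0$ supported on a single interior face, shows that the jump of $\bm\sigma_h^*\cdot\bm n$ is $L^2(F)$-orthogonal to $P_k(F)$; as the jump itself lies in $P_k(F)$, it vanishes, giving $\bm\sigma_h^*\in H(\ddiv;\Omega)$. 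For the divergence formula, I would test $(\ddiv\bm\sigma_h^*,v_h)_T$ against an arbitrary $v_h\in P_{k+1}(T)$, integrate by parts the $\widetilde{\bm\sigma}_h$ contribution, kill the interior term via~\eqref{eq:def-post-1} (since $\bm\nabla v_h\in [P_k(T)]^d$), and rewrite the boundary term via~\eqref{eq:def-post-2}; summing over $T\in\mathcal T_h$ and inserting~\eqref{eq:hdg-2} yields $(\ddiv\bm\sigma_h^*,v_h) = -(f,v_h)$ for all $v_h\in V_h$, which since $\ddiv\bm\sigma_h^*|_T\in P_{k+1}(T)$ forces $\ddiv\bm\sigma_h^* = -\mathcal P_h^{k+1}f = \mathcal P_h^{k+1}\ddiv\bm\sigma$.

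For \eqref{esti-1}, I would split $\norm{\bm\sigma-\bm\sigma_h^*}\le\norm{\bm\sigma-\bm\sigma_h} + \norm{\widetilde{\bm\sigma}_h}$ and bound the correction element by element. Since $\widetilde{\bm\sigma}_h\in\text{RT}_{k+1}(T)$ has vanishing interior moments against $[P_k(T)]^d$ by \eqref{eq:def-post-1} and its normal trace is the explicit polynomial identified above, a standard scaling argument on the RT element gives
\[
  \norm{\widetilde{\bm\sigma}_h}_T
  \lesssim h_T^{1/2}\norm{\widetilde{\bm\sigma}_h\cdot\bm n}_{\partial T}
  = h_T^{-1/2}\norm{\mathcal P_T^\partial u_h-\lambda_h}_{\partial T}.
\]
Decomposing $\mathcal P_T^\partial u_h - \lambda_h = (\mathcal P_T^\partial e_h^u - e_h^\lambda) + \mathcal P_T^\partial(\mathcal P_T^{k+1}u - u)$ (using $(\mathcal P_M u)|_{\partial T} = \mathcal P_T^\partial u$), squaring, summing over $T$, recognizing the first contribution as part of $\vertiii{\cdot}_h$, and invoking the standard trace-approximation estimate $h_T^{-1}\norm{(I-\mathcal P_T^{k+1})u}_{\partial T}^2\lesssim\verti{(I-\mathcal P_T^{k+1})u}_{1,T}^2$ on the second, yields $\norm{\widetilde{\bm\sigma}_h}\lesssim \vertiii{(e_h^u,e_h^\lambda,\bm e_h^{\bm\sigma})}_h + \verti{(I-\mathcal P_h^{k+1})u}_{1,h}$, which combined with the triangle inequality gives \eqref{esti-1}.

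The rate \eqref{esti-2} is then a routine consequence: Theorem~\ref{thm:conv_sigma} and Corollary~\ref{coro1} absorb $\vertiii{\cdot}_h$ and $\norm{\bm\sigma-\bm\sigma_h}$ into the desired $h^{\min\{s+1,k+1\}}$ rate, and the standard approximation bound on $(I-\mathcal P_h^{k+1})u$ takes care of the remaining term. The only mildly delicate moment in the whole argument is the normal-trace identification in the first paragraph: it rests on the fortunate nesting $P_k(F)\subset P_{k+1}(F)$ between the numerical-trace space and the RT face space, which is precisely what makes $\widetilde{\bm\sigma}_h\cdot\bm n$ cancel the nonconforming part of $\bm\sigma_h\cdot\bm n$ exactly; once this is in hand, everything else is moment bookkeeping and a single scaling estimate.
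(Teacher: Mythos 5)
Your proposal is correct and follows essentially the same route as the paper: identify the normal trace of $\widetilde{\bm\sigma}_h$ via \eqref{eq:def-post-2} and use \eqref{eq:hdg-3} to kill the interface jumps, derive the divergence identity from \eqref{eq:hdg-2} by two integrations by parts together with \eqref{eq:def-post-1}--\eqref{eq:def-post-2}, and bound $\norm{\widetilde{\bm\sigma}_h}_T$ by the RT scaling estimate plus the decomposition $\mathcal P_T^\partial u_h-\lambda_h=(\mathcal P_T^\partial e_h^u-e_h^\lambda)+\mathcal P_T^\partial(\mathcal P_T^{k+1}u-u)$. The only difference is that you spell out the pointwise trace identification $\widetilde{\bm\sigma}_h\cdot\bm n|_F=\alpha_T(\mathcal P_T^\partial u_h-\lambda_h)$ using $P_k(F)\subset P_{k+1}(F)$, which the paper leaves as "easy to verify."
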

\begin{proof}
  By \eqref{eq:hdg-3} and \eqref{eq:def-post-2}, it is easy to verify
  that
  \[
    (\bm\sigma_h^*\cdot \bm n)|_{\partial T} = 0 \quad
    \text{for all $ T \in \mathcal T_h $.}
  \]
  This implies $ \bm\sigma_h^*\in H(\ddiv;\Omega) $. From \eqref{eq:hdg-2} it
  follows,   for all $ q \in P_{k+1}(T) $,
  \[
    -(q,\ddiv\bm\sigma_h)_T + \dual{
      \alpha_T(\mathcal P_T^\partial u_h - \lambda_h), q
    }_{\partial T} = (f, q)_T.
  \]
 Then, using integration by parts, we obtain,  for all $ q \in P_{k+1}(T) $,
  \[
    (\nabla q, \bm\sigma_h)_T - \dual{
      \bm\sigma_h\cdot\bm n - \alpha_T(\mathcal P_T^\partial u_h - \lambda_h),
      q
    }_{\partial T} = (f, q)_T.
  \]
  In view of~\eqref{eq:def-post-1}--\eqref{eq:def-post-2},
  this implies
  \[
    (\nabla q, \bm\sigma_h - \widetilde{\bm\sigma}_h)_T - \bint{(\bm\sigma_h -
    \widetilde{\bm\sigma}_h)\cdot\bm n , q}{\partial T}
    = (f, q)_T\quad \text{for all $ q \in P_{k+1}(T) $,}
  \]
i.e.
  \[
    (\nabla q, \bm\sigma_h^*)_T - \dual{\bm\sigma_h^*\cdot\bm n , q}_{\partial T}
    = (f, q)_T \quad \text{for all $ q \in P_{k+1}(T) $.}
  \]
 From integration by parts it follows 
  \[
    -(q, \ddiv\bm\sigma_h^*)_T = (f, q)_T \quad
    \text{for all $ q \in P_{k+1}(T) $,}
  \]
  or equivalently,
  \[
    (\ddiv \bm\sigma_h^*)|_T = \mathcal P_T^{k+1}\ddiv\bm\sigma.
  \]

  Now let us show~\eqref{esti-1}. By \eqref{eq:def-post-1} and
  \eqref{eq:def-post-2}, a simple scaling argument yields
  \[
    \norm{\widetilde{\bm\sigma}_h}_T \lesssim h_T^{-\frac12}
    \norm{
      \mathcal P_T^\partial u_h - \lambda_h
    }_{\partial T} \quad \text{for all $ T \in \mathcal T_h $.}
  \]
  By~\eqref{eq:def-e_h} it holds
  \begin{align*}
    {} &
    \norm{\mathcal P_T^\partial u_h - \lambda_h}_{\partial T} \\
    ={} &
    \norm{
      \mathcal P_T^\partial e_h^u - e_h^\lambda +
      \mathcal P_T^\partial(\mathcal P_T^{k+1} u - u)
    }_{\partial T} \\
    \leqslant{} &
    \norm{\mathcal P_T^\partial e_h^u - e_h^\lambda}_{\partial T} +
    \norm{(I-\mathcal P_T^{k+1})u}_{\partial T}.
  \end{align*}
  Using the above two estimates, we obtain
  \begin{align*}
    \norm{\widetilde{\bm\sigma}_h} &
    \lesssim \left(
      \sum_{T\in\mathcal T_h} h_T^{-1}
      \norm{
        \mathcal P_T^\partial e_h^u - e_h^\lambda
      }^2_{\partial T}
    \right)^\frac12 +
    \left(
      \sum_{T\in\mathcal T_h} h_T^{-1}
      \norm{ (I-\mathcal P_T^{k+1})u }_{\partial T}^2
    \right)^\frac12.
  \end{align*}
  Since standard approximation properties of the $ L^2 $-orthogonal
  projection yield
  \[
    h_T^{-1} \norm{(I-\mathcal P_T^{k+1})u}_{\partial T}^2 \lesssim
    \verti{(I-\mathcal P_T^{k+1})u}_{1,T}^2 \quad
    \text{for all $ T \in \mathcal T_h $,}
  \]
  it follows 
  \begin{equation}
    \label{eq:lxy-0129}
    \norm{\widetilde{\bm\sigma}_h}
    \lesssim \left(
      \sum_{T\in\mathcal T_h} h_T^{-1}
      \norm{
        \mathcal P_T^\partial e_h^u - e_h^\lambda
      }^2_{\partial T}
    \right)^\frac12 + \verti{(I-\mathcal P_h^{k+1})u}_{1,h}.
  \end{equation}
 Finally, \eqref{esti-1} follows from  \eqref{sigma*},
  \eqref{eq:lxy-0129}, and the definition~\eqref{eq:def_vertiii_h} of $
  \vertiii{\cdot}_h $, and    \eqref{esti-2} follows from
  \eqref{esti-1}, Theorem~\ref{thm:conv_sigma}, and Corollary~\ref{coro1}. This
  completes the proof.
\end{proof}

\section{Numerical experiments}\label{sec_num}
This section provides numerical experiments in two-space dimensions to verify our theoretical results.
We consider the problem \eqref{eq:model} with $ \Omega = (0,1) \times (0, 1) $ and
\begin{displaymath}
  {\bm c}(x,y) = \left (
  \begin{array}{cc}
    1 + x^2y^2 & 0\\
    0 & 1+x^2y^2\\
  \end{array}
\right),
\end{displaymath}
and we set $u(x,y) = \sin(\pi x) \sin(\pi y) $ to be the analytic solution. \par

We start with an initial mesh shown in Figure \ref{initial mesh} with $h^{-1} = 2$
and obtain a sequence of refined meshes by bisection. Numerical results are presented
in Tables \ref{table_numerical}-\ref{table_new_flux}  for the HDG method
\eqref{discretization} with $k = 0, 1 $.

Table \ref{table_numerical} shows the history of convergence for the potential
approximation $u_h $ and the flux approximation $ \bm\sigma_h $. We can see that
for $k=0$, which corresponds to the lowest order HDG method, the potential
error $ \norm{u-u_h}$ is of second-order accuracy, and the flux error
$ \norm{\bm\sigma - \bm\sigma_h}$ is of first-order accuracy, while for
$k=1 $, $ \norm{u-u_h}$ is of third-order accuracy and $ \norm{\bm\sigma - \bm\sigma_h}$
is of second-order accuracy. These numerical results are conformable
to the error estimates in Theorems \ref{thm:conv_sigma}-\ref{thm:conv_u}  and Corollaries \ref{coro1}-\ref{coro2}.

Table \ref{table_new_flux}  shows the history of convergence for the
postprocessed flux approximation $ \bm\sigma_h^*$. We can see that for
$k=0$, $ \norm{\bm\sigma-\bm\sigma_h^*} $ is of first-order accuracy,
and $ \norm{\ddiv\bm\sigma-\ddiv\bm\sigma_h^*}$ is of second-order accuracy,
while for $k=1 $, $ \norm{\bm\sigma-\bm\sigma_h^*} $ is of second-order
accuracy and $ \norm{\ddiv\bm\sigma-\ddiv\bm\sigma_h^*}$ is of third-order
accuracy. These numerical results are conformable to the error estimates
in Theorem \ref{thm_appro_post_sigma}.

\begin{figure}[H]
  \begin{center}
  \includegraphics[width=0.4\linewidth]{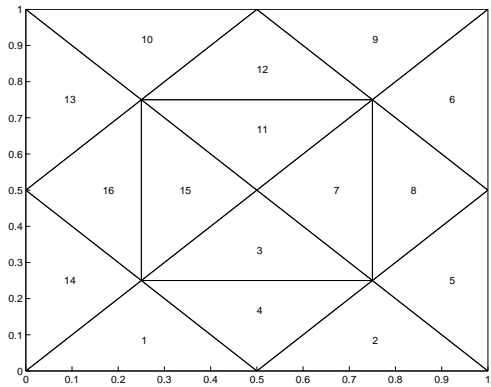}
  \end{center}
  \caption{Initial mesh with $h^{-1} = 2$}\label{initial mesh}
\end{figure}

\begin{table}[H]
  \begin{center}
  \begin{tabular}{p{2.cm} p{2.cm} p{2.cm} p{2.cm} p{2.cm} p{2.cm}}
    \toprule
    Degree $k$ & Mesh $h^{-1}$ & \multicolumn{2}{l}{$ \norm{u-u_h}$} &
    \multicolumn{2}{l}{$ \norm{\bm\sigma - \bm\sigma_h}$} \\
    \cmidrule(r){3-4} \cmidrule(l){5-6}
    &               & {Error}          &  {Order}        & {Error}        & {Order} \\
    \midrule
    0 &      2 & 3.052e-1 & -               & 1.230 & -\\
    &      4 & 7.828e-2 & 1.963 & 6.443e-1 & 0.933\\
    &      8 & 1.968e-2 & 1.992 & 3.250e-1 & 0.987\\
    &      16 & 4.927e-3 & 1.998 & 1.629e-1 & 0.997\\
    &      32 & 1.232e-3 & 1.999 & 8.147e-2 & 0.999\\
    1 &      2 & 3.431e-2 & -               & 2.524e-1 & -\\
    &      4 & 4.376e-3 &  2.971 & 6.211e-2 & 2.023\\
    &      8 & 5.510e-4 & 2.990 & 1.552e-2 & 2.000\\
    &      16 & 6.900e-5 &  2.997 & 3.882e-3 & 2.000\\
    \bottomrule
  \end{tabular}
  \caption{History of convergence for $u_h $ and $ \bm\sigma_h $}\label{table_numerical}
  \end{center}
\end{table}

\begin{table}[H]
  \begin{center}
  \begin{tabular}{p{2.cm} p{2.cm} p{2.cm} p{2.cm} p{2.cm} p{2.cm}}
    \toprule
    Degree $k$ & Mesh $h^{-1}$ & \multicolumn{2}{l}{$ \norm{\bm\sigma-\bm\sigma_h^*}$} &
    \multicolumn{2}{l}{$ \norm{\ddiv\bm\sigma-\ddiv\bm\sigma_h^*}$} \\
    \cmidrule(r){3-4} \cmidrule(r){5-6}
    &               & {Error}          &  {Order}          & {Error}  & {Order}\\
    \midrule
    0 &      2 & 1.080 & -               &  0.7003 & -\\
    &      4 & 5.616e-1 & 0.944 &  0.1861 & 1.912 \\
    &      8 & 2.826e-1 & 0.991 &  0.0470 & 1.985\\
    &      16 & 1.415e-1 & 0.998 &  0.0118 & 1.996\\
    &      32 & 7.078e-2 & 0.999 &  0.0029 & 1.999\\
    1 &      2 & 2.278e-1 & -               &  0.0114 &-\\
    &      4 & 5.514e-2 &  2.046 &  0.0014 & 3.015 \\
    &      8 & 1.373e-2 & 2.006 &  1.7919e-4 & 2.997\\
    &      16 & 3.429e-3 & 2.001 &  2.2405e-5 & 2.999\\
    \bottomrule
  \end{tabular}
  \caption{History of convergence for $ \bm\sigma_h^*$}\label{table_new_flux}
  \end{center}
\end{table}


\begin{thebibliography}{99}
  \bibitem{ADAMS}R.\ A.\ Adams and J.\ J.\ F.\ Fournier, Sobolev Spaces, Academic Press, 2nd ed., 2003.
    \par
  \bibitem{ArnoldBrezzi1985} D.\ N.\ Arnold and F.\ Brezzi, Mixed and non-conforming finite element methods: implementation,
    post-processing and error estimates, Mod\'el. Math. Anal. Num\'er., 19 (1985), 7-35.
    \par
  \bibitem{Atluri-Murakawa1977} S. N. Atluri and H. Murakawa, On hybrid finite element models in nonlinear solid mechanics,
    Finite elements in nonlinear mechanics, 1 (1977), 3-41.
    \par
  \bibitem{Babuska-Oden-Lee1977} I. Babuska, J. Oden and J. Lee, Mixed-hybrid finite element approximations of second-order
    elliptic boundary-value problems, Comput. Methods Appl. Mech. Engrg., 11 (1977), 175-206.
    \par
  \bibitem{Braess1998} D. Braess, Enhanced assumed strain elements and locking in membrane problems, Comput. Methods Appl.
    Mech. Engrg., 165 (1998), 155-174.
    \par
  \bibitem{Braess-C-Reddy2004}D. Braess, C. Carstensen and B. D. Reddy, Uniform convergence and a posteriori error estimators
    for the enhanced strain finite element method, Numer. Math., 96 (2004), 461-479.
    \par
  \bibitem{Brezzi1974}F. Brezzi, On the existence, uniqueness and approximation of saddle-point problems arising from
    Lagrangian multipliers, R.A.I.R.O. Anal Numer., 8 (1974), 129-151.
    \par
  \bibitem{BDM;1985} F. Brezzi, J. Douglas. Jr. and L. D. Marini, Two families of mixed finite elements for second
    order elliptic problems, Numer. Math., 47 (1985), 217-235.
    \par
  \bibitem{Brezzi-Fortin1991} F. Brezzi and M. Fortin, Mixed and Hybrid Finite Element Methods, Springer-Verlag, New York, 1991.
    \par
  \bibitem{Brezzi-Marini1975} F. Brezzi and L. D. Marini, On the numerical solution of plate bending problems by hybrid methods,
    R.A.I.R.O. Anal Numer., 9 (1975), 5-50.
    \par
  \bibitem{Ciarlet1978} P. Ciarlet, The finite element method for elliptic problems, North-Holland, Armsterdam, 1978.
    \par
  \bibitem{analy_minimal_ldg} B. Cockburn and B. Dong, An analysis of the minimal dissipation local discontinuous Galerkin method
    for convection-diffusion problems, J. Sci. Comput., 32 (2007), 233-262.
    \par
  \bibitem{Cockburn;2014} B. Cockburn, O. Dubois, J. Gopalakrishnan and S. Tan, Multigrid for an HDG
    method, IMA J. Numer. Anal., 34 (2014), 1386-1425.
    \par
  \bibitem{COCKBURN-GOPALAKRISHNAN2004}B. Cockburn and J. Gopalakrishnan, A characterization of hybridized mixed methods for
    second order elliptic problems, SIAM J. Numer. Anal., 42 (2004), 283-301.
    \par
  \bibitem{unified_hyd} B. Cockburn, J. Gopalakrishnan and R. Lazarov, Unified hybridization of discontinuous Galerkin,
    mixed, and conforming Galerkin methods for second order elliptic problems, SIAM J. Numer. Anal., 47 (2009),
    1319-1365.
    \par
  \bibitem{projection_based_hdg} B. Cockburn, J. Gopalakrishnan and F. J. Sayas, A projection-based error analysis of HDG
    methods, Math. Comp., 79 (2010), 1351-1367.
    \par
  \bibitem{embedded_hdg}B. Cockburn, J. Guzm\'an, S. C. Soon and H. K. Stolarski, An analysis
    of the embedded discontinuous Galerkin method for second order elliptic problems, SIAM J.
    Numer. Anal., 47 (2009), 2686-2707.
    \par
  \bibitem{Cockburn-Gopalakrishnan-Wang2007}B. Cockburn, J. Gopalakrishnan and H. Wang, Locally conservative fluxes
    for the continuous Galerkin method, SIAM J. Numer. Anal., 45 (2007), 1742-1776.
    \par
  \bibitem{super_con_dg} B. Cockburn, J. Guzm\'{a}n and H. Wang, Superconvergent discontinuous Galerkin methods for
    second-order elliptic problems, Math. Comp., 78 (2009), 1-24.
    \par
  \bibitem{cond_super_con_hdg}B. Cockburn, W. Qiu and K. Shi, Conditions for superconvergence of HDG methods for second-order
    elliptic problems, Math. Comp., 81 (2012), 1327-1353.
    \par
  \bibitem{Farhloul-Fortin1997}M. Farhloul and M. Fortin, Dual hybrid methods for the
    elasticity and the Stokes problems: a unified approach, Numer. Math., 76 (1997), 419-440.
    \par
  \bibitem{Veubeke1965} B. Fraejis De Veubeke, Displacement and equilibrium models in the finite element method, in Stress Analysis, O. C. Zienkiewicz and G. Holister, eds., Wiley, New York, 1965.
    \par
  \bibitem{Gudi;2010}T. Gudi, A new error analysis for discontinuous finite element methods
    for linear elliptic problems, Math. Comp., 79 (2010), 2169-2189.
    \par
  \bibitem{Kasper-Taylor2000} E. P. Kasper and R. L. Taylor, A mixed-enhanced strain method - part I: Geometrically
    linear problems, Comp. Struct., 75 (2000), 237-250.
    \par
  \bibitem{Lehrenfeld2010} C. Lehrenfeld, Hybrid Discontinuous Galerkin Methods for Solving
    Incompressible Flow Problems, PhD Thesis, RWTH Aachen University (2010).
    \par
  \bibitem{Oden-Lee1977} J. Oden and J. Lee, Dual-Mixed Hybrid finite element method for second-order elliptic problems,
    Lecture Notes in Mathematics 606 (1977), 275-291.
    \par
  \bibitem{Oikawa2014} I. Oikawa, A Hybridized Discontinuous Galerkin Method with
    Reduced Stabilization, J. Sci. Comput.,
    65 (2015), 327-340.
    \par
  \bibitem{Pian1964}
    T. H. H. Pian,
    Derivation of element stiffness matrices by assumed stress distributions,
    AIAA Journal, 2 (1964), 1333-1336.
    \par
  \bibitem{Pian1972}
    T. H. H. Pian,
    Finite element formulation by variational principles with relaxed continuity
    requirements,
    The Mathematical Foundations of the Finite Element Method with
    Applications to Partial Differential Equations,
    Part II (A. K. Aziz, Editor),
    Academic Press, New York, 1972, pp. 671-687. MR 49 \#11824.
    \par
  \bibitem{Pian1995}T. H. H. Pian, State-of-the-art development of hybrid/mixed finite element method, Finite Elements
    in Analysis and Design, 21 (1995), 5-20.
    \par
  \bibitem{pian1984rational} T. H. H. Pian and K. Sumihara, Rational approach for assumed stress finite elements.
    Internat. J. Numer. Methods Engrg., 20 (1984), 1685-1695.
    \par
  \bibitem{Pian-Tong1969}T. H. H. Pian and P. Tong, Basis of finite element methods for solid continua, Internat. J. Numer.
    Methods Engrg., 1 (1969), 3-28.
    \par
  \bibitem{Pian-Tong1986}T. H. H. Pian and P. Tong, Relations between incompatible displacement model and hybrid stress model, Internat. J. Numer. Methods Engrg., 22 (1986), 173-181.
    \par
  \bibitem{Pian-Wu}T. H. H. Pian and C. C. Wu, A rational approach for choosing stress terms for hybrid finite element
    formulations, Internat. J. Numer. Methods Engrg., 26 (1988), 2331-2343.
    \par
  \bibitem{Punch-Atluri1984} E. F. Punch and S. N. Atluri, Development and testing of stable, invariant, isoparametric
    curvilinear 2- and 3-D hybrid-stress elements, Comput. Methods Appl. Mech. Eng., 47 (1984), 331-356.
    \par
  \bibitem{Qiu} W. Qiu and K. Shi, An HDG method for convection diffusion equation, J. Sci. Comput., 18 (2015), 1-12.
    \par
  \bibitem{Qiu;2} W. Qiu, J. Shen and K. Shi, An HDG method for linear elasticity with strong symmetric stresses,
    arXiv:1312.1407.
    \par
  \bibitem{RT}P. A. Raviart and J. M. Thomas, A mixed finite element method for second order elliptic problems,
    Mathematical Aspects of Finite Element Method (I. Galligani, E. Magenes, eds.), Lecture Notes in
    Mathmatics, vol. 606, Springer, Berlin Heidelberg New York, 1977, 292-315.
    \par
  \bibitem{Raviart-Thomas1977} P. A. Raviart and J. M. Thomas, Primal hybrid finite element methods for 2nd order elliptic
    equations, J. Mathematics of computation, 31 (1977), 391-413.
    \par
  \bibitem{Raviart-Thomas1979} P. A. Raviart and J. M. Thomas, Dual finite element models for second order elliptic
    problems, J. Energy methods in finite element analysis.(A 79-53076 24-39) Chichester, Sussex, England,
    Wiley-Interscience, 1979, 175-191.
    \par
  \bibitem{Reddy-Simo1995} B. Reddy and J. Simo, Stability and convergence of a class of
    enhanced strain methods, SIAM J. Numer. Anal., 32 (1995), 1705-1728.
    \par
  \bibitem{Roberts-Thomas1991}J. Roberts and J. Thomas, Mixed and hybrid methods, in Handbook of Numerical Analysis,
    II, Handb. Numer. Anal., II, North-Holland, Amsterdam, 1991, 523-639.
    \par
  \bibitem{Simo-Rifai1990} J. Simo and M. Rifai, A class of mixed assumed strain methods and the method
    of incompatible modes, Internat. J. Numer. Methods Engrg., 29 (1990), 1595-1638.
    \par
  \bibitem{Sze1992} K. Y. Sze, Efficient formulation of robust hybrid elements using orthogonal stress/strain
    interpolants and admissible matrix formulation, Internat. J. Numer. Methods Engng., 35 (1992), 1-20.
    \par
  \bibitem{Sze1993}K. Y. Sze, Hybrid hexahedral element for solids,plates,shells and beams by selective scaling,
    Internat. J. Numer. Methods Engng., 36 (1993), 1519-1540.
    \par
  \bibitem{xie2004optimization}X. Xie and T. Zhou, Optimization of stress modes by energy compatibility
    for 4-node hybrid quadrilaterals, Internat. J. Numer. Methods Engrg., 59 (2004), 293-313.
    \par
  \bibitem{yu2011uniform}G. Yu, X. Xie and C. Carstensen, Uniform convergence and a posteriori error estimation
    for assumed stress hybrid finite element methods, Comput. Methods Appl. Mech. Engrg., 200 (2011), 2421-2433.
    \par
  \bibitem{Zhang-Xie2010} S. Zhang and X. Xie, Accurate 8-node hybrid hexahedral elements with energy-compatible stress
    modes, Adv. Appl. Math. Mech., 2 (2010), 333-354.
    \par
  \bibitem{Zhou-Xie2002}T. Zhou and X. Xie, A unified analysis for stress/strain hybrid methods of high
    performance, Comput. Methods Appl. Mech. Engrg., 191 (2002), 4619-4640.
    \par
\end{thebibliography}
\end{document}